\documentclass[reqno]{amsart}
\usepackage[a4paper, margin=2cm] {geometry}
\usepackage{etoolbox}

\patchcmd{\section}{\scshape\centering}{\bfseries\Large}{}{}
\makeatletter
\renewcommand{\@secnumfont}{\bfseries\Large}
\makeatother

\makeatletter

\def\enddoc@text{\ifx\@empty\@translators \else\@settranslators\fi}

\g@addto@macro\@setauthors{
  \par\vspace{2ex} 
  \@setaddresses
}
\makeatother

\patchcmd{\subsection}{\bfseries}{\bfseries\large}{}{}
\patchcmd{\subsection}{-.5em}{.5ex plus .2ex}{}{}

\usepackage[utf8]{inputenc}
\usepackage{amsmath}
\usepackage{siunitx}
\usepackage{amsfonts}
\usepackage{graphicx}
\usepackage{wrapfig}
\usepackage{subfig}
\usepackage{listings}
\usepackage{mathabx}
\usepackage{hyperref}
\usepackage{color} 
\usepackage{amssymb,amscd}
\usepackage{enumerate}
\usepackage{etoolbox}
\usepackage{tikz}
\usepackage{faktor}
\usepackage{graphicx}
\usepackage[utf8]{inputenc}
\usepackage{amsmath}
\usepackage{siunitx}
\usepackage{amsfonts}
\usepackage{graphicx}
\usepackage{wrapfig}
\usepackage{subfig}
\usepackage{listings}
\usepackage{mathabx}
\usepackage{scalerel}
\usepackage{stackengine}
\usepackage{color}

\numberwithin{equation}{section}

\newtheorem{theorem}{Theorem}
\newtheorem{lemma}[theorem]{Lemma}

\newtheorem{proposition}[theorem]{Proposition}

\theoremstyle{definition}

\newtheorem{problem}{Problem}

\newtheorem{definition}[theorem]{Definition}

\usepackage{thmtools}
\usepackage{thm-restate}

\makeatletter
\newcommand{\restatecor}[2]{%
  \begingroup
  \let\c@theorem\c@restatecorcounter
  \def\thetheorem{#1}%
  \def\theHtheorem{restatecor.#1}%
  #2*%
  \endgroup
}
\makeatother

\usepackage{mathtools}
\usepackage{enumitem}

\usepackage{overpic} 
\usepackage{url} 
\usepackage{comment} 

\newcommand{\dd}{\displaystyle\int\limits_{\mathbb{R}^d}}

\newcommand{\rr}{\mathbb{R}}

\newcommand{\lip}{\mathrm{Lip}_0}
\newcommand{\cw}{\overset{w}{\to}}

\newcommand{\comp}{\overset{c}{\hookrightarrow}}
\newcommand{\FF}{\mathcal{F}}

\newcommand{\Lip}{\mathrm{Lip}}

\date{}
\usepackage{amsrefs}

\hypersetup{colorlinks=false}

\usepackage[nameinlink]{cleveref}

\crefformat{equation}{\begingroup\hypersetup{linkbordercolor=cyan}#2eq.~(#1)#3\endgroup}
\Crefformat{equation}{\begingroup\hypersetup{linkbordercolor=cyan}#2Eq.~(#1)#3\endgroup}

\crefformat{theorem}{\begingroup\hypersetup{linkbordercolor=red}#2theorem~#1#3\endgroup}
\Crefformat{theorem}{\begingroup\hypersetup{linkbordercolor=red}#2Theorem~#1#3\endgroup}

\crefformat{lemma}{\begingroup\hypersetup{linkbordercolor=red}#2lemma~#1#3\endgroup}
\Crefformat{lemma}{\begingroup\hypersetup{linkbordercolor=red}#2Lemma~#1#3\endgroup}

\crefformat{proposition}{\begingroup\hypersetup{linkbordercolor=red}#2proposition~#1#3\endgroup}
\Crefformat{proposition}{\begingroup\hypersetup{linkbordercolor=red}#2Proposition~#1#3\endgroup}

\crefformat{observation}{\begingroup\hypersetup{linkbordercolor=red}#2observation~#1#3\endgroup}
\Crefformat{observation}{\begingroup\hypersetup{linkbordercolor=red}#2Observation~#1#3\endgroup}

\newcommand{\eqcref}[1]{%
  \begingroup
  \hypersetup{linkbordercolor=cyan}%
  \eqref{#1}%
  \endgroup
}

\makeatletter

\pretocmd{\@setthanks}{\par\noindent}{}{}
\makeatother
\title{The Dunford--Pettis property for Lipschitz-free spaces over $\rr^n$}
\author{Fraser Mason}
\address{Department of Pure Mathematics and Mathematical Statistics, Centre for Mathematical Sciences, University of Cambridge, 
  Wilberforce Road, Cambridge CB3 0WB, United Kingdom}
\email{fm531@cam.ac.uk}
\keywords{Lipschitz-free space, Dunford--Pettis property}
\subjclass[2020]{Primary 46B20, 46E15; Secondary 46B03.}

\begin{document}

\begin{abstract}
    \noindent We show that for every $n\in \mathbb{N}$, the Lipschitz-free space $\FF(\rr^n)$ has the Dunford--Pettis property. As a consequence, for every $n\in \mathbb{N}$ and every nonempty $M\subset \rr^n$, the space $\FF(M)$ has the Dunford--Pettis property. This implies that $\FF(M)$ is not isomorphic to $\FF(X)$ for any separable Banach space $X$ that fails the Dunford--Pettis property. In particular, this applies to $X=\ell_2$, and more generally to every infinite-dimensional separable reflexive space. We also deduce that if $1<p<\infty$, then $\FF(\ell_p)$ is not isomorphic to $\left(\bigoplus\limits_{n=1}^\infty \FF(\ell_p^n) \right)_{\ell_1}$.
\end{abstract}
\maketitle

\section{Introduction} 

\subsection{Main results} 
\noindent Given a metric space $M$ and a basepoint $0_M\in M$, $\lip(M)$ denotes the space of Lipschitz functions $f:M\to \mathbb{R}$ with $f(0_M)=0$, which is a Banach space with respect to the Lipschitz norm. For $x\in M$, the map $\delta_M(x):\lip(M)\to \mathbb{R}$ defined by $\delta_M(x)(f):=f(x)$ is an element of $\lip(M)^{\star}$, and the \emph{Lipschitz-free space} $\FF(M)$ is defined to be the subspace $\overline{\mathrm{span}}^{\lVert \cdot \rVert}\{\delta_M(x):x\in M\}\subset \lip(M)^{\star}$. The isometry classes of both $\lip(M)$ and $\FF(M)$ are independent of the choice of basepoint. \\

\noindent Our main results are as follows. To the best of our knowledge, the question of whether $\FF(\rr^d)$ has the Dunford--Pettis property was open for $d\geq 2$. In this paper, we give a positive answer to this question. 

\begin{restatable}{theorem}{thmone}
\label{dunfordpettis}
    Let $d\in \mathbb{N}$. Then $\FF(\rr^d)$ has the Dunford--Pettis property.
\end{restatable}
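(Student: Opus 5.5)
The plan is to realize $\FF(\rr^d)$ isomorphically inside, or as a complemented subspace of, a space already known to have the Dunford--Pettis property, namely an $L_1$-type space or an $\ell_1$-sum of spaces with the Schur/DP property. Since the Dunford--Pettis property passes to complemented subspaces but not to arbitrary subspaces, I will aim for complementation. I will also use the standard sufficient condition that $X$ has the DPP whenever every weakly null sequence $(x_n)\subset X$ and every weakly null sequence $(x_n^*)\subset X^*$ satisfy $x_n^*(x_n)\to 0$. I would prove this directly for $X=\FF(\rr^d)$, with $X^*=\lip(\rr^d)$.

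The first step is the known decomposition of $\FF(\rr^d)$ through the divergence/gradient picture. By the result of Cúth--Kalenda--Kaplický, $\FF(\rr^d)$ is isomorphic to the quotient $L_1(\rr^d;\rr^d)/\{\text{divergence-free fields}\}$, and dually $\lip(\rr^d)$ is identified with the subspace $\{\nabla f\}$ of $L_\infty(\rr^d;\rr^d)$. Under this identification, a weakly null sequence $(f_n)$ in $\lip(\rr^d)$ has bounded gradients. Lipschitz functions vanishing at $0$ and converging weakly to $0$ must converge pointwise to $0$, since $\delta(x)\in\FF$. By Arzelà--Ascoli they then converge uniformly on compacts.

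Next I take a weakly null sequence $(\mu_n)\subset\FF(\rr^d)$ and want $\langle f_n,\mu_n\rangle\to 0$. Using a Kalton-type localization, I decompose $\FF(\rr^d)$ into pieces supported on dyadic annuli or cubes. This gives an isomorphism of $\FF(\rr^d)$ with a complemented subspace of $(\bigoplus_k \FF(Q_k))_{\ell_1}$, where the $Q_k$ are bounded cubes; the Dunford--Pettis property is stable under $\ell_1$-sums. So the problem reduces to the bounded case $\FF([0,1]^d)$, with uniform constants. For the cube, I would write each $\mu_n$ via a smoothing/convolution operator $T_\varepsilon$ as $\mu_n=T_\varepsilon\mu_n+(\mu_n-T_\varepsilon\mu_n)$. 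The smoothed part lies in the range of a compact-like or $L_1$-valued operator: $T_\varepsilon\mu_n$ is represented by an $L_1$ density. Pairing it with $f_n$, which is uniformly small on the compact cube, gives a small contribution.

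The main obstacle is the remainder term $\langle f_n,\mu_n-T_\varepsilon\mu_n\rangle$. There $\|\mu_n-T_\varepsilon\mu_n\|$ is not uniformly small, because weak nullity gives no norm control, and $f_n$ is only bounded in Lipschitz norm. My first attempt would be to transfer the smoothing onto the function, $\langle f_n-T_\varepsilon^*f_n,\mu_n\rangle$. Since $\nabla f_n\to 0$ weak$^*$ in $L_\infty$, I would try to show that $(\nabla f_n)$ is actually weakly null in $L_\infty(\rr^d;\rr^d)$ restricted to gradient fields. I would then use that $L_1$ and $L_\infty$ (a $C(K)$ space) have the DPP, after lifting $(\mu_n)$ to a weakly null sequence of vector fields in $L_1$. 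This lifting of weakly null sequences through the quotient map, or an equivalent argument via a Pełczyński property (V$^*$) type statement for $\FF$, is where I expect the real difficulty to lie. I would attack it by using the bounded-cube reduction and a Whitney-type bounded linear right inverse of the divergence on suitable subspaces.
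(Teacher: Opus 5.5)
Your proposal has a genuine gap: the theorem lives entirely in the step you defer, and the tool you propose for that step cannot exist. The parts you do carry out are correct but easy. Weakly null $(f_n)$ in $\lip(\rr^d)$ converge locally uniformly to $0$. The reduction to a bounded cube is legitimate, though Kaufmann's $\FF(\rr^d)\sim\FF((-1,1)^d)$ gives it directly. And $\langle f_n,T_\varepsilon\mu_n\rangle\to0$ for fixed $\varepsilon$, because $T_\varepsilon\mu_n$ has a compactly supported $L^1$ density of norm $O(1/\varepsilon)$.

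The difficulties are these:
\begin{itemize}
\item The smoothing split buys nothing. The remainder $\langle f_n-T_\varepsilon^*f_n,\mu_n\rangle$ is exactly as hard as $\langle f_n,\mu_n\rangle$, since $\nabla(f_n-f_n\star\phi_\varepsilon)$ is not small in any useful sense.
\item Weak nullity of $(\nabla f_n)$ in $L^\infty(\rr^d;\rr^d)$ needs no proof. The gradient map is an isometry of $\lip(\rr^d)$ onto the closed subspace $N^\perp$, so it is immediate from the hypothesis.
\item What remains is to lift the weakly null sequence $(\mu_n)$ in $L^1(\rr^d;\rr^d)/N$ to a weakly null sequence in $L^1(\rr^d;\rr^d)$. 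Given such a lift, the DPP of $L^1$ finishes the proof in one line, so this ``lemma'' is at least as strong as the theorem. It is false for general quotients of $L^1$-spaces: for a quotient map $\ell_1\to\ell_2$, the unit vector basis has no weakly null lift, by the Schur property. You give no reason why the divergence-free subspace $N$ behaves better.
\item A bounded linear right inverse of the divergence on $\FF(\rr^d)$ would make $\FF(\rr^d)$ isomorphic to a complemented subspace of $L^1$. This is impossible for $d\ge2$ by Naor--Schechtman, and the same objection rules out your opening plan of complementing $\FF(\rr^d)$ in an $L_1$-type space. The phrase ``on suitable subspaces'' has no content when $(\mu_n)$ is arbitrary.
\item A property (V$^*$) statement would not help either, since reflexive spaces have (V$^*$) and fail the DPP.
\end{itemize}

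The paper never lifts weak nullity to $L^1$. It uses only a bounded lift, $\lVert\mathbf f_k\rVert_{L^1}\le1$, and does all the work on the Lipschitz side by adapting Bourgain's construction for Sobolev spaces. Write $\langle\mathbf f,p\rangle=\int\mathbf f\cdot\nabla p$. The ingredients your plan lacks are as follows.
\begin{itemize}
\item After extending to $p_k$ supported in $[-2,2]^d$ (so $p_k\to0$ uniformly), one needs $\lVert\nabla p_k\rVert\cw0$ in $L^\infty(\rr^d)$ for the Euclidean norm of the gradient. This comes from Toland's characterisation of weakly null sequences in $L^\infty$.
\item Mazur then gives convex combinations with $\bigl\lVert\sum_m\lambda_m\lVert\nabla p_m\rVert\bigr\rVert_\infty<\varepsilon^2$. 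Hence smooth cut-offs $\tau_m$, equal to $1$ where $\lVert\nabla(p_m\star\phi_\eta)\rVert\ge2\varepsilon$ and to $0$ where it is $\le\varepsilon$, have average $<\varepsilon$.
\item A pigeonhole argument, using $\lVert p_n\rVert_\infty\to0$, selects an index $n_k$ with cut-off $\psi_k$ such that $\lvert\langle\mathbf f_n,\psi_kP_{k-1}(p_n\star\phi_\eta)\rangle\rvert<\varepsilon_k$ for all $\eta>0$ and all $n$ in an infinite set $N_k$. Here $P_{k-1}=(1-\psi_1)\cdots(1-\psi_{k-1})$.
\item Iterating produces $x_k=P_{k-1}(p_{n_k}\star\phi_{\eta_k})$ with $\sum_k\lVert\nabla x_k(t)\rVert\le K$ for every $t$. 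So $e_k\mapsto\nabla x_k$ is bounded from $c_0$ into $N^\perp$, while $\langle\mathbf f_{n_k},x_k\rangle>\delta/2$.
\item Test the weakly null $(\mathbf f_{n_k}+N)$ against weak$^*$ limits of the signed sums $\sum_l\sigma_l\nabla x_l$, and run the gliding-hump argument from the Schur property of $\ell_1$. This forces $\sum_l\lvert\langle\mathbf f_{n_k},x_l\rangle\rvert\to0$, contradicting the diagonal bound.
\end{itemize}
This $c_0$-dominated sequence with large diagonal pairings is the idea your proposal is missing. Without it, or an actual proof of the lifting property, your argument does not reach the conclusion.
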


\noindent We also obtain three corollaries, all of which follow quickly from Theorem \ref{dunfordpettis}. The first states that the Lipschitz-free space over any nonempty subset of $\rr^d$ has the Dunford--Pettis property.

\begin{restatable}{corollary}{cortwo}\label{M}
    Let $d\in \mathbb{N}$ and $M$ be a nonempty subset of $\rr^d$. Then $\FF(M)$ has the Dunford--Pettis property.
\end{restatable}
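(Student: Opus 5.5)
The plan is to deduce the corollary from Theorem \ref{dunfordpettis} by showing that $\FF(M)$ is isomorphic to a complemented subspace of $\FF(\rr^d)$, or at least embeds into a space with the Dunford--Pettis property in a way that passes the property down. Since the Dunford--Pettis property is inherited by complemented subspaces but not by arbitrary subspaces, complementation is the key point.

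\textbf{Step 1: reduce to closed $M$.} Every Lipschitz function on $M$ extends uniquely to $\overline{M}$ with the same Lipschitz constant. Hence $\FF(M)$ is isometric to $\FF(\overline{M})$, and we may assume $M$ is closed. We also place the basepoint in $M$; this is harmless because the isometry class does not depend on the basepoint.

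\textbf{Step 2: complementation.} By McShane extension, $\FF(M)$ embeds isometrically into $\FF(\rr^d)$ as $\overline{\mathrm{span}}\{\delta(x):x\in M\}$. What we need is a bounded projection. For this I would use the known result, due to Lancien--Pernecká (building on Whitney-type constructions), that for closed $M\subset\rr^d$ there is a \emph{linear} extension operator $E:\lip(M)\to\lip(\rr^d)$ that is bounded and weak$^*$-to-weak$^*$ continuous. One standard way to build it is a Whitney decomposition of $\rr^d\setminus M$ with a Lipschitz partition of unity, with norm bounded by a constant depending only on $d$. Its preadjoint $P:\FF(\rr^d)\to\FF(M)$ satisfies $P\circ\iota=\mathrm{id}$, where $\iota$ is the canonical embedding. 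So $\FF(M)$ is complemented in $\FF(\rr^d)$. The weak$^*$ continuity, which is what makes the preadjoint exist, should follow from the locality of the Whitney construction: on bounded sets, pointwise convergence of $f$ forces pointwise convergence of $Ef$. Combined with boundedness, this gives weak$^*$ continuity.

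\textbf{Step 3: conclude.} Let $(x_n)$ be weakly null in $\FF(M)$ and $(x_n^*)$ weakly null in $\FF(M)^*$. Then $(\iota x_n)$ is weakly null in $\FF(\rr^d)$, and $(P^*x_n^*)$ is weakly null in $\FF(\rr^d)^*$. Theorem \ref{dunfordpettis} gives
\[
\langle x_n^*,x_n\rangle=\langle x_n^*,P\iota x_n\rangle=\langle P^*x_n^*,\iota x_n\rangle\to 0.
\]

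I expect Step 2 to be the main obstacle, specifically the weak$^*$ continuity of the linear extension operator. If I have to construct it myself rather than cite it, I would first try the Whitney-cube construction with explicitly controlled constants. Then I would check pointwise convergence on bounded sets via the local finiteness of the partition of unity.
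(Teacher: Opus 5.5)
Your proposal is correct and is essentially the paper's argument: $\FF(M)$ is complemented in $\FF(\rr^d)$, Theorem \ref{dunfordpettis} applies to $\FF(\rr^d)$, and your Step 3 is precisely the proof of Lemma \ref{grothendieck}(i), which says that the Dunford--Pettis property passes to complemented subspaces. The only difference is where the complementation comes from: the paper cites Proposition \ref{Complementation} (Candido--C\'uth--Doucha, via Lee--Naor's gentle partitions of unity), which holds for arbitrary subsets of a doubling space and so needs no reduction to closed $M$, whereas you pass to $\overline{M}$ and use a Whitney-type extension operator, which is weak$^*$-continuous because it is locally finite and hence pointwise-continuous on bounded sets.
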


\noindent \noindent We are not aware of any result in the literature resolving the question of whether $\FF(\rr^2)$ and $\FF(\ell_2)$ are isomorphic. In the introduction to \cite{pelczynski}, it was asked whether, for Banach spaces $E$ and $X$ with $\dim E<\infty$ and $\dim X=\infty$, $\FF(E)$ and $\FF(X)$ are necessarily nonisomorphic. We remark that if $X$ is nonseparable, then $\FF(E)$ is separable while $\FF(X)$ is nonseparable, and therefore the interesting case is when $X$ is separable. The following result provides a negative answer to the first question and a partial answer to the second in the case that $X$ fails the Dunford--Pettis property; in particular, it applies whenever $X$ is separable, infinite-dimensional and reflexive.

\begin{restatable}{corollary}{corthree}\label{inf}
    Let $X$ be a separable infinite-dimensional Banach space that fails the Dunford--Pettis property. Then for any $d\in \mathbb{N}$ and any nonempty $M\subset \rr^d$, $\FF(M)$ is not isomorphic to $\FF(X)$. In particular, this holds if $X=\ell_p$ for some $1<p<\infty$, or, more generally, if $X$ is a separable infinite-dimensional reflexive Banach space.
\end{restatable}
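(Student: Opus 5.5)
We argue by contradiction. Suppose $\FF(M)$ is isomorphic to $\FF(X)$ for some nonempty $M\subset\rr^d$. By Corollary \ref{M}, $\FF(M)$ has the Dunford--Pettis property. The Dunford--Pettis property is an isomorphic invariant, since it is phrased purely in terms of weakly null sequences and weakly compact operators. Hence $\FF(X)$ also has the Dunford--Pettis property. It then suffices to show that this property passes from $\FF(X)$ down to $X$, contradicting the hypothesis on $X$.

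The key step is the linearization result of Godefroy--Kalton: if $X$ is a separable Banach space, then $X$ is isometric to a $1$-complemented subspace of $\FF(X)$. The barycenter map $\beta:\FF(X)\to X$, $\delta(x)\mapsto x$, is a norm-one linear quotient. Separability of $X$ lets one build a norm-one linear right inverse, for instance by averaging $\delta$ over shrinking Gaussian measures on finite-dimensional subspaces and passing to a limit. We invoke this as a standard result rather than reprove it.

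Next we recall that the Dunford--Pettis property passes to complemented subspaces. Let $P:Z\to Y$ be a bounded projection and let $T:Y\to W$ be weakly compact. Then $T\circ P$ is weakly compact on $Z$, so it maps weakly null sequences to norm null ones. Since weakly null sequences in $Y$ are weakly null in $Z$, it follows that $T$ is completely continuous. Applying this with $Z=\FF(X)$ and $Y$ the isometric copy of $X$ shows that $X$ has the Dunford--Pettis property, a contradiction.

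For the ``in particular'' statements, we must check that every infinite-dimensional reflexive space $X$ fails the Dunford--Pettis property. The identity on $X$ is weakly compact because the unit ball is weakly compact by reflexivity. The Dunford--Pettis property would therefore make the identity completely continuous. By Eberlein--Šmulian, every bounded sequence has a weakly convergent subsequence, and complete continuity would then make that subsequence norm convergent. The unit ball would then be norm compact, contradicting infinite dimensionality. This covers $\ell_p$ for $1<p<\infty$. The only substantive input beyond Corollary \ref{M} is the Godefroy--Kalton complementation, which requires separability; this is exactly why that hypothesis appears in the statement.
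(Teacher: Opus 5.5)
Your proof is correct and follows the paper's route. The paper's Lemma \ref{lifting} packages the Godefroy--Kalton complementation of $X$ in $\FF(X)$ together with the passage of the Dunford--Pettis property to complemented subspaces; it then contrasts this with Corollary \ref{M} and handles the reflexive case by the same weakly compact identity argument you give. The only cosmetic difference is that you check heredity via the weakly-compact-operator formulation, whereas the paper uses paired weakly null sequences in $X$ and $X^\star$.
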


\noindent In \cite{isolipschitzspace}, Candido, C\'uth and Doucha ask whether $\FF(\ell_p)$ is isomorphic to $\left(\bigoplus\limits_{n=1}^\infty \FF(\ell_p^n) \right )_{\ell_1}$ for some (or even every) $1\leq p<\infty$ (\cite{isolipschitzspace}*{Question 8}). They also prove that their dual spaces are isomorphic (\cite{isolipschitzspace}*{Theorem 3.1}). As far as we know, this isomorphism problem for the Lipschitz-free spaces has remained open. The following result shows nonisomorphism for every $1<p<\infty$.

\begin{restatable}{corollary}{corfour}\label{l1sum}
    Let $1<p<\infty$. Then $\FF(\ell_p)$ and $\left(\bigoplus\limits_{n=1}^\infty \FF(\ell_p^n) \right)_{\ell_1}$ are not isomorphic.
\end{restatable}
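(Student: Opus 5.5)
The plan is to argue by contradiction from Theorem \ref{dunfordpettis}, using that the Dunford--Pettis property passes to complemented subspaces and to $\ell_1$-sums. Set $Z:=\left(\bigoplus_{n=1}^\infty \FF(\ell_p^n)\right)_{\ell_1}$.

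First, $Z$ has the Dunford--Pettis property. Each $\ell_p^n$ is linearly isomorphic to $\rr^n$ (with constant depending on $n$), so $\FF(\ell_p^n)\simeq \FF(\rr^n)$. Each $\FF(\rr^n)$ has the Dunford--Pettis property by Theorem \ref{dunfordpettis}, and the Dunford--Pettis property is an isomorphic invariant. Hence every summand $\FF(\ell_p^n)$ has it. It is a classical fact that an $\ell_1$-sum of spaces with the Dunford--Pettis property has it, and this requires no uniformity in the isomorphism constants, since the property is qualitative. To see why, let $z_k\to 0$ weakly in $Z$ and $z_k^*\to 0$ weakly in $Z^*=\left(\bigoplus \FF(\ell_p^n)^*\right)_{\ell_\infty}$, and suppose $|z_k^*(z_k)|\ge\varepsilon$ along a subsequence.
\begin{itemize}
\item Coordinatewise, $z_k^*(n)(z_k(n))\to 0$ for each fixed $n$, because each summand has the Dunford--Pettis property.
\item Weakly null sequences in an $\ell_1$-sum have uniformly small tails, by a Schur-type gliding hump argument: otherwise one extracts a subsequence equivalent to the $\ell_1$ basis, contradicting weak nullity.
\end{itemize}
Combining these with $\sup_k\|z_k^*\|<\infty$ gives $z_k^*(z_k)\to 0$, a contradiction. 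I would simply cite this stability result rather than reprove it.

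Second, $\FF(\ell_p)$ fails the Dunford--Pettis property for $1<p<\infty$. Lipschitz-free spaces over Banach spaces contain the underlying space complementedly. Concretely, the barycentre map $\beta:\FF(X)\to X$ is a linear quotient map with $\beta\circ\delta_X=\mathrm{id}_X$. Godefroy--Kalton's lifting theorem gives, for separable $X$, a linear isometric embedding $T:X\to\FF(X)$ with $\beta\circ T=\mathrm{id}_X$. So $T\circ\beta$ is a projection onto a copy of $X$. Since $\ell_p$ is infinite-dimensional and reflexive, it fails the Dunford--Pettis property: the unit vectors $e_k$ and the coordinate functionals $e_k^*$ are both weakly null, yet $e_k^*(e_k)=1$. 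The property passes to complemented subspaces, because we can pull back functionals through the projection. Therefore $\FF(\ell_p)$ fails it.

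If $\FF(\ell_p)\simeq Z$, then $\FF(\ell_p)$ would have the Dunford--Pettis property, contradicting the second step. The main obstacle I expect is only the bookkeeping in the $\ell_1$-sum stability argument, which I would cite from the literature (e.g.\ Diestel's survey). The complementation step is the same one presumably used for Corollary \ref{inf}.
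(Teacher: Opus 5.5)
Your proposal is correct and matches the paper's proof. In both, $\FF(\ell_p)$ fails the Dunford--Pettis property by the Godefroy--Kalton lifting (Lemma \ref{lifting}). Each $\FF(\ell_p^n)\sim\FF(\rr^n)$ has the property by Theorem \ref{dunfordpettis}, and it passes to $\ell_1$-sums (Lemma \ref{grothendieck} (ii)). The only cosmetic difference is that you show $\ell_p$ fails the property directly with $(e_k)$ and $(e_k^*)$, whereas the paper argues via reflexivity.
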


\subsection{Background and motivation}

\noindent We now discuss some background and motivation. Recall that $\FF(M)$ separates the points of $\lip(M)$ and the closed unit ball $B_{\lip(M)}$ is compact in the initial topology $\sigma(\lip(M), \FF(M))$. Consequently, $\FF(M)^\star$ is isometrically isomorphic to $\lip(M)$. The map $\delta_M:M\to \FF(M)$ is an isometric embedding, and $\FF(M)$ is a Banach space that linearises the Lipschitz structure of $M$: given any metric space $N$ with basepoint $0_N$ and any Lipschitz map $f:M\to N$ such that $f(0_M)=0_N$, there is a unique bounded linear map $\hat{f}:\mathcal{F}(M)\to \mathcal{F}(N)$ which extends $f$ (in the sense that $\hat{f}\circ \delta_M=\delta_N\circ f$), and moreover $\lVert \hat{f} \rVert=\lVert f \rVert_{\Lip}$. \\

\noindent There has been much research on Lipschitz spaces and Lipschitz-free spaces over subsets of $\rr^n$, and some fundamental questions remain open. By Naor and Schechtman \cite{planarearthmover}, for $n\geq 2$, $\FF(\rr^n)$ is not isomorphic to a subspace of $\FF(\rr)$. Currently, it is unknown whether $\FF(\rr^2)$ and $\FF(\rr^3)$ are isomorphic. More generally, for distinct $m, n\geq 2$, it remains open whether $\FF(\rr^m)$ and $\FF(\rr^n)$ can be isomorphic; the analogous problem for $\lip(\rr^m)$ and $\lip(\rr^n)$ is also unresolved. In \cite{isolipschitzspace}, it was shown by Candido, C\'uth and Doucha that for every $d\in \mathbb{N}$, $\lip(\mathbb{Z}^d)$ and $\lip(\rr^d)$ are isomorphic. In \cite{Aliaga}, Aliaga generalised this result and showed that if $M\subset \mathbb{R}^d$ is non-porous (in particular, if $M$ is Lebesgue measurable with positive Lebesgue measure), then $\lip(M)$ is isomorphic to $\lip(\mathbb{R}^d)$. Aliaga asks whether the Lipschitz-free spaces $\FF(\rr\times \mathbb{Z})$ and $\FF(\rr^2)$ are isomorphic (see \cite{Aliaga}*{Question 5}). This question motivated us to consider the Dunford--Pettis property. It is known (see \cite{normedspaces}*{Proposition 3.14}) that $\FF(\rr\times\mathbb{Z})\sim \FF(\rr)\oplus_1 \FF(\mathbb{Z}^2)$, and that both $\FF(\rr)$ and $\FF(\mathbb{Z}^2)$ have the Dunford--Pettis property. For the former, $\FF(\rr)\cong L^1(\rr)$, while the latter in fact has the stronger Schur property; more generally, $\FF(M)$ has the Schur property for any complete, purely 1-unrectifiable metric space $M$ (see \cite{unrectifiable}). Therefore $\FF(\rr\times \mathbb{Z})$ has the Dunford--Pettis property. This suggested the possibility of distinguishing it from $\FF(\rr^2)$ by showing that $\FF(\rr^2)$ fails the Dunford--Pettis property. However, Theorem \ref{dunfordpettis} shows that $\FF(\rr^2)$ has the Dunford--Pettis property, so this property cannot be used to distinguish these spaces.

\subsection{Structure and overview}

\noindent The structure of this paper is as follows. In Section $2$, we collect results from the literature that will be used in the proofs of Theorem \ref{dunfordpettis} and Corollaries \ref{M}, \ref{inf} and \ref{l1sum}. In Section 3, we prove Theorem \ref{dunfordpettis} and then deduce Corollaries \ref{M}, \ref{inf} and \ref{l1sum} from it. We conclude in Section 4 with some open problems.

\section{Some results from the literature}

\noindent \textbf{Notation:} For Banach spaces $X$ and $Y$, we write $X \overset{c}{\hookrightarrow} Y$ to mean that $X$ is isomorphic to a complemented subspace of $Y$. We write $D_X=\{x\in X:\lVert x \rVert<1\}$ for the open unit ball of $X$, and $B_X=\{x\in X:\lVert x \rVert\leq 1\}$ for the closed unit ball of $X$. We write $X\cong Y$ to denote that $X$ and $Y$ are isometrically isomorphic, and $X\sim Y$ to denote that $X$ and $Y$ are isomorphic. For a metric space $(M, d)$, a point $x\in M$, and $r>0$, we write $\overline{B}(x, r)=\{y\in M:d(x, y)\leq r\}$ for the closed ball of radius $r$ in $M$ centred at $x$. 
If a metric space $M$ is contained in an ambient space with a distinguished point $0$ (for example, if $M$ is a subset of a Banach space), and we choose a different basepoint $q\in M$, we may sometimes write $\Lip_q(M)$ instead of $\lip(M)$ to avoid confusion. Unless otherwise stated, the metric on $\rr^n$ and all of its subsets is taken to be that derived from the Euclidean norm. \\

\noindent We now collect some results from the literature. The first is due to Kaufmann, and may be found as \cite{Kaufmann}*{Corollary 3.5}. 

\begin{proposition}\label{nonemptyinterior}
    Let $n\in \mathbb{N}$ and let $F\subset \rr^n$ have nonempty interior. Then $\FF(F)\sim \FF(\rr^n)$.
\end{proposition}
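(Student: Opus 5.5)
The statement to prove is Kaufmann's result: if $F\subset\rr^n$ has nonempty interior, then $\FF(F)\sim\FF(\rr^n)$. The plan is to use Pełczyński's decomposition method. This needs three ingredients: that each of $\FF(F)$ and $\FF(\rr^n)$ is complemented in the other, and that one of them is isomorphic to its own $\ell_1$-sum.

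\textbf{Step 1: $\FF(F)\comp\FF(\rr^n)$.} Lipschitz-free spaces over subsets of $\rr^n$ are complemented in $\FF(\rr^n)$. The projection comes from a bounded linear extension operator $E:\lip(F)\to\lip(\rr^n)$ that is weak$^\star$-continuous; one can take a Whitney-type extension, or Lancien--Pernecká's construction. Its preadjoint is a bounded projection of $\FF(\rr^n)$ onto $\FF(F)$. I expect this step to be the main technical obstacle, because weak$^\star$-continuity of the extension must be checked: on bounded sets, pointwise convergence must be preserved.

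\textbf{Step 2: $\FF(\rr^n)\comp\FF(F)$.} We may assume $F$ contains a closed ball $B$. By Step 1, $\FF(B)$ is complemented in $\FF(F)$, since the retraction-type projection restricts to it. It is therefore enough to show $\FF(\rr^n)\comp\FF(B)$. For this I would decompose $\rr^n$ into dyadic annuli, or cover it by cubes, and rescale each piece into $B$. Dilations are similarities, so $\FF(tB)\cong\FF(B)$ isometrically. This realises $\FF(\rr^n)$ as complemented in $\big(\bigoplus_k\FF(B)\big)_{\ell_1}$, using a Kalton-type decomposition $\FF(\rr^n)\comp\big(\bigoplus_k\FF(A_k)\big)_{\ell_1}$ over annuli $A_k$ of radii $2^k$.

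\textbf{Step 3: the $\ell_1$-sum.} Next I show $\big(\bigoplus_k\FF(B)\big)_{\ell_1}\comp\FF(B)$. Inside $B$, choose disjoint, well-separated small balls $B_k$ accumulating at a point. Each $\FF(B_k)\cong\FF(B)$ after rescaling. By a standard result for well-separated pieces (Kalton's lemma), $\overline{\mathrm{span}}\bigcup_k\FF(B_k)$ is isomorphic to $\big(\bigoplus_k\FF(B_k)\big)_{\ell_1}$. It is also complemented: combine Step 1 for $\bigcup_k B_k\subset\rr^n$ with the projection $\FF(B)\to\FF(\bigcup_k B_k)$.

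\textbf{Conclusion.} With $Y:=\FF(B)$, Step 3 gives $\ell_1(Y)\comp Y$, and the previous steps give $\FF(\rr^n)\comp\ell_1(Y)$ and $Y\comp\FF(\rr^n)$. It follows that $\FF(\rr^n)\comp Y$. Since $\ell_1(Y)\sim\ell_1(\ell_1(Y))$, Pełczyński's decomposition yields $Y\sim\FF(\rr^n)$. Running the same argument with $F$ in place of $B$, using $\FF(B)\comp\FF(F)\comp\FF(\rr^n)$, gives $\FF(F)\sim\FF(\rr^n)$.
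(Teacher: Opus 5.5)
The paper gives no proof of this proposition; it simply quotes it as Kaufmann's Corollary~3.5. Your outline is correct and is essentially the standard argument behind that result: Kalton's dyadic decomposition plus dilation gives $\FF(\rr^n)\comp\ell_1(\FF(B))$, an $\ell_1$-sum of well-separated rescaled balls inside $B$ gives $\ell_1(\FF(B))\comp\FF(B)$, $\FF(F)\comp\FF(\rr^n)$ comes from a weak$^\star$-continuous extension operator (this is exactly the paper's Proposition~\ref{Complementation}, so the step you flag as the main obstacle is available off the shelf), and Pe\l czy\'nski's decomposition method finishes. The only detail to tidy is in Step~3: if each $\FF(B_k)$ is based at the centre $c_k$ of $B_k$ and $z$ is the accumulation point, then $\overline{\mathrm{span}}\bigcup_k\FF(B_k)$ is not literally $\FF(\bigcup_k B_k\cup\{z\})$; but with your separation, the map $\pi$ that collapses each $B_k$ to $c_k$ and fixes $z$ is a Lipschitz retraction of $\bigcup_k B_k\cup\{z\}$, and $I-\hat\pi$ is then a bounded projection onto $\overline{\mathrm{span}}\bigcup_k\FF(B_k)$, so the complementation still goes through.
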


\noindent The next result gives, for a nonempty open convex subset $U$ of $\rr^n$, an explicit isometric representation of $\FF(U)$ as a quotient of $L^1(U;\rr^n)$. It may be found as the particular case $E=\rr^n$ of \cite{quotientrepresentation}*{Theorem 1.1}. In the following, $L^1(U;\rr^n)=\{(g_1, \cdots, g_n):g_i\in L^1(U)\}$ with the norm $\lVert (g_1, \cdots, g_n) \rVert=\displaystyle\int\limits_{U} \lVert (g_1(x), \cdots, g_n(x)) \rVert_2 \ dx$. Given $\textbf{g}\in L^1(U;\rr^n)$, we extend $\textbf{g}$ by $0$ outside $U$ to give an integrable field on $\rr^n$, and interpret $\mathrm{div}(\textbf{g})$ as the distributional divergence of this extension, an element of $\mathcal{D}'(\rr^n)$. The case $U=\rr^n$ was also obtained by Godefroy and Lerner in \cite{R^nquotient}*{Theorem 2.3}.

\begin{proposition}\label{representation}
    Let $n\in \mathbb{N}$, and let $U\subset \rr^n$ be nonempty, open and convex. Then $\FF(U)$ is isometrically isomorphic to the quotient space $L^1(U;\rr^n)/N$, where $N$ is the closed subspace of $L^1(U; \rr^n)$ consisting of all $\textbf{g}\in L^1(U;\rr^n)$ such that $\mathrm{div}(\textbf{g})=0$ in $\mathcal{D}'(\rr^n)$.
\end{proposition}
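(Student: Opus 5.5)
The plan is to prove the isometry by duality. I would first identify the dual of $L^1(U;\rr^n)/N$ with $\lip(U)$ isometrically, and then check that this identification is weak$^\star$-to-weak$^\star$ continuous. Since $\FF(U)^\star\cong\lip(U)$ with $\FF(U)$ as the canonical predual, it then follows that the identification is the adjoint of an onto isometry $L^1(U;\rr^n)/N\to\FF(U)$.

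\textbf{Step 1: the pairing.} Fix the basepoint $0_U\in U$ and define $T:\lip(U)\to L^\infty(U;\rr^n)$ by $Tf=\nabla f$. This makes sense by Rademacher's theorem. Because $U$ is convex, every segment $[x,y]$ lies in $U$. Integrating $\nabla f$ along (almost every nearby parallel) segment then gives $\lVert f\rVert_{\Lip}=\lVert\nabla f\rVert_{L^\infty(U;\rr^n)}$, where $L^\infty$ carries the Euclidean norm pointwise. So $T$ is an isometry, and its range $G$ is a closed subspace of $L^\infty(U;\rr^n)=L^1(U;\rr^n)^\star$.

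\textbf{Step 2: $G=N^\perp$.} This is equivalent to $N={}^\perp G$ together with $G$ being weak$^\star$-closed.

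For $N\subset{}^\perp G$, take $\textbf{g}\in N$ and $f\in\lip(U)$. Extend $f$ to all of $\rr^n$ with the same Lipschitz constant (McShane, componentwise is enough here) and mollify to get $f_\varepsilon\in C^\infty$ with $\nabla f_\varepsilon\to\nabla f$ a.e. on $U$ and uniformly bounded. Cut $f_\varepsilon$ off outside a large ball; the error is small since $\textbf{g}\in L^1$. Then $\mathrm{div}\,\textbf{g}=0$ gives $\int_U\textbf{g}\cdot\nabla f_\varepsilon=0$, and dominated convergence gives $\int_U\textbf{g}\cdot\nabla f=0$.

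For ${}^\perp G\subset N$, restrictions of $\varphi\in C_c^\infty(\rr^n)$ to $U$ lie in $\lip(U)$ up to an additive constant. So $\int\textbf{g}\cdot\nabla\varphi=0$ for all such $\varphi$, which is exactly $\mathrm{div}\,\textbf{g}=0$ in $\mathcal D'(\rr^n)$.

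For weak$^\star$-closedness, by Krein--Smulian it suffices to treat bounded nets. If $\nabla f_\alpha\to h$ weak$^\star$ with $\lVert f_\alpha\rVert_{\Lip}\le C$, Arzel\`a--Ascoli gives a subnet $f_\alpha\to f$ locally uniformly with $\lVert f\rVert_{\Lip}\le C$. Testing against $C_c^\infty(U)$ then shows $h=\nabla f$.

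As an alternative check of surjectivity, suppose $h\in N^\perp$. Testing against the divergence-free fields $\partial_j\varphi\, e_i-\partial_i\varphi\, e_j$ with $\varphi\in C_c^\infty(U)$ shows that $h$ is curl-free in $U$. Since $U$ is convex, hence star-shaped, the Poincar\'e lemma (applied after mollification) gives $h=\nabla f$ with $f$ Lipschitz.

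\textbf{Step 3: conclusion.} Steps 1 and 2 give $(L^1(U;\rr^n)/N)^\star\cong N^\perp=G\cong\lip(U)$ isometrically, via $\Phi:f\mapsto[\textbf{g}\mapsto\int_U\textbf{g}\cdot\nabla f]$. On bounded sets, pointwise convergence of Lipschitz functions with bounded constants implies weak$^\star$ convergence of their gradients, by the compactness argument from Step 2. So $\Phi$ is weak$^\star$-continuous on $B_{\lip(U)}$, where the weak$^\star$ topology on $\lip(U)$ is $\sigma(\lip(U),\FF(U))$, and by Banach--Dieudonn\'e it is weak$^\star$-continuous. Therefore $\Phi=S^\star$ for a bounded operator $S:L^1(U;\rr^n)/N\to\FF(U)$. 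Since $\Phi$ is an onto isometry, $S$ is an onto isometry as well.

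Concretely, $S^{-1}(\delta(x)-\delta(0_U))$ is the class of a field $\textbf{g}_x$ with $\mathrm{div}\,\textbf{g}_x=\mu_0-\mu_x$. One can build it by averaging unit flows along segments parallel to $[0_U,x]$ starting in a small ball inside $U$, and letting the ball shrink; convexity keeps all these segments in $U$.

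The step I expect to be the main obstacle is Step 2, specifically the boundary behaviour. The divergence condition is imposed on the zero-extension in $\mathcal D'(\rr^n)$, not just in $\mathcal D'(U)$. So one must check carefully that pairing with gradients of Lipschitz functions on $U$, which need not be smooth up to $\partial U$, exactly encodes this zero normal flux condition. My first attempt would be the extension-and-mollification argument above. If near-boundary behaviour causes trouble, a fallback is to shrink $U$ by homotheties $x\mapsto x_0+t(x-x_0)$ with $t\uparrow1$, which keeps convexity, and pass to the limit.
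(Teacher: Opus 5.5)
Your duality argument is correct and is essentially the proof in the source the paper cites for this statement (C\'uth, Kalenda and Kaplick\'y). There, the gradient map $\lip(U)\to L^\infty(U;\rr^n)$ is shown to be an isometry by convexity and weak$^\star$-to-weak$^\star$ continuous, hence the adjoint of a quotient map $L^1(U;\rr^n)\to\FF(U)$, whose kernel is then identified with the divergence-free fields by testing against $C_c^\infty(\rr^n)$ and by extension plus mollification. The boundary difficulty you anticipate does not arise: after McShane extension, mollification and cut-off the test functions lie in $C_c^\infty(\rr^n)$ while $\textbf{g}$ vanishes off $U$, so $N\subset{}^\perp G$ holds for any open $U$, and convexity enters only through the isometry in Step~1.
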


\noindent Before the next proposition, recall that for any open $U\subset \rr^n$, $L^1(U;\rr^n)^*$ is isometrically isomorphic to $L^\infty(U;\rr^n)$, with the duality pairing given by 
\[\langle \textbf{f}, \textbf{g}\rangle=\int\limits_U \langle \textbf{f}(x), \textbf{g}(x) \rangle_{\rr^n} \ dx, \ \qquad\forall \textbf{f}\in L^1(U;\rr^n), \ \forall\textbf{g}\in L^\infty(U;\rr^n).\] \noindent Under this identification, the next result identifies the annihilator $N^\perp$ with the subspace of gradients of Lipschitz functions. It can be obtained by combining Propositions 3.2 and 3.3 of \cite{quotientrepresentation}.

\begin{proposition}\label{nperp}
    Let $n\in \mathbb{N}$, $U\subset \rr^n$ be nonempty, open and convex, and $x\in U$ be a basepoint. Let $N$ be the closed subspace of $L^1(U; \rr^n)$ consisting of all $\textbf{g}\in L^1(U;\rr^n)$ such that $\mathrm{div}(\textbf{g})=0$ in $\mathcal{D}'(\rr^n)$. Then \[N^\perp=\{\nabla p:p\in \Lip_x(U)\}\subset L^\infty(U;\rr^n).\]
\end{proposition}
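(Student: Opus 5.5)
The statement has two inclusions, and I would prove them separately using only the duality between $L^1(U;\rr^n)$ and $L^\infty(U;\rr^n)$ recalled above.

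\emph{Inclusion $\{\nabla p\}\subset N^\perp$.} Let $p\in \Lip_x(U)$. By Rademacher's theorem $\nabla p$ exists a.e. and $\lVert \nabla p\rVert_\infty\le \lVert p\rVert_{\Lip}$, so $\nabla p\in L^\infty(U;\rr^n)$. Extend $p$ to a Lipschitz $P$ on $\rr^n$ with the same constant, using McShane's extension on each coordinate-free Lipschitz bound. Let $P_\varepsilon=P*\rho_\varepsilon$ be a mollification and $\chi_R(\cdot)=\chi(\cdot/R)$ a smooth cutoff equal to $1$ on $B(0,R)$ and supported in $B(0,2R)$.

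Fix $\textbf{g}\in N$, extended by $0$ outside $U$. Since $\chi_R P_\varepsilon\in C_c^\infty(\rr^n)$, the condition $\mathrm{div}(\textbf{g})=0$ gives
\[
\int_{\rr^n}\langle \chi_R\nabla P_\varepsilon+P_\varepsilon\nabla\chi_R,\textbf{g}\rangle\,dx=0 .
\]
The first term tends to $\int_U\langle\nabla p,\textbf{g}\rangle$ by dominated convergence, letting $\varepsilon\to0$ and then $R\to\infty$. This uses $\nabla P_\varepsilon\to\nabla P$ a.e. with uniform bounds, $\nabla P=\nabla p$ a.e. on $U$, and $\textbf{g}\in L^1$. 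For the second term, $|P_\varepsilon(y)|\le C(1+|y|)$ and $|\nabla\chi_R|\le C/R$ on the annulus $R\le|y|\le 2R$. Hence the integrand is uniformly bounded, supported in $\{|y|\ge R\}$, and the term vanishes as $R\to\infty$. Therefore $\langle\nabla p,\textbf{g}\rangle=0$.

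\emph{Inclusion $N^\perp\subset\{\nabla p\}$.} Let $\textbf{h}\in N^\perp$. For $\varphi\in C_c^\infty(U)$ and $i\ne j$, the field $\partial_j\varphi\, e_i-\partial_i\varphi\, e_j$ is smooth, compactly supported in $U$, and divergence-free, so it lies in $N$. Pairing it with $\textbf{h}$ shows that $\partial_j h_i=\partial_i h_j$ in $\mathcal{D}'(U)$, i.e. $\textbf{h}$ is curl-free.

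Next mollify: let $\textbf{h}_\varepsilon=\textbf{h}*\rho_\varepsilon$ on $U_\varepsilon=\{y\in U:\mathrm{dist}(y,\partial U)>\varepsilon\}$. The set $U_\varepsilon$ is again convex, and $\textbf{h}_\varepsilon$ is smooth and curl-free there. For $\varepsilon$ small, fix $x_0\in U_\varepsilon$ and define
\[
p_\varepsilon(y)=\int_0^1\langle \textbf{h}_\varepsilon(x_0+t(y-x_0)),y-x_0\rangle\,dt .
\]
The classical Poincaré lemma gives $\nabla p_\varepsilon=\textbf{h}_\varepsilon$. Because $U_\varepsilon$ is convex, integrating along segments shows $p_\varepsilon$ is $\lVert\textbf{h}\rVert_\infty$-Lipschitz for the Euclidean metric. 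Normalise so that $p_\varepsilon(x)=0$.

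By Arzelà--Ascoli and a diagonal argument over an exhaustion of $U$, a subsequence converges locally uniformly to some $p\in\Lip_x(U)$. In $\mathcal{D}'(U)$ we have $\nabla p_\varepsilon\to\nabla p$, while $\textbf{h}_\varepsilon\to\textbf{h}$ in $L^1_{\mathrm{loc}}$. Hence $\nabla p=\textbf{h}$ a.e., which is the required representation.

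I expect the main obstacle to be this second inclusion: it requires a distributional Poincaré lemma with control of the Lipschitz constant. Convexity of $U$, and hence of every $U_\varepsilon$, is exactly what makes the segment integral both well defined and uniformly Lipschitz. The remaining steps are routine limiting arguments.
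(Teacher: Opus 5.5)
Your proof is correct. For the harder inclusion it takes a genuinely different route from the one behind the paper, which gives no proof of its own and instead combines Propositions 3.2 and 3.3 of \cite{quotientrepresentation}.

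That source argues, in essence, by duality. The map $p\mapsto\nabla p$ is shown to be a weak$^*$-to-weak$^*$ continuous isometric embedding of $\Lip_x(U)$ into $L^\infty(U;\rr^n)$, using $\Lip(p)=\lVert\nabla p\rVert_\infty$ on convex $U$. It is therefore the adjoint of a quotient map $L^1(U;\rr^n)\to\FF(U)$ and has weak$^*$-closed range. The kernel of that quotient map, i.e.\ the pre-annihilator of the range, is identified with $N$; the nontrivial half of this is exactly your first inclusion, via extension, mollification and cut-off. The bipolar theorem then gives $N^\perp$ equal to the range.

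You instead construct the potential directly: curl-freeness from the rotational fields $\partial_j\varphi\,e_i-\partial_i\varphi\,e_j\in N$, mollification on the convex sets $U_\varepsilon$, the segment-integral Poincar\'e lemma, and Arzel\`a--Ascoli. This avoids Krein--\v{S}mulian and any a priori isometry or weak$^*$-continuity statement, and it proves slightly more. Since only those rotational fields were used, $N^\perp$ coincides with the space of bounded curl-free fields on $U$. Moreover, your bound $\Lip(p)\le\lVert\mathbf h\rVert_\infty$, combined with uniqueness of potentials up to constants on the connected set $U$, yields the isometry $p\mapsto\nabla p$ that the paper invokes at the start of the proof of Theorem~\ref{dunfordpettis}.

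What the duality route buys in return is economy. The same machinery produces the quotient representation of Proposition~\ref{representation} simultaneously, and the identification of $N^\perp$ falls out of general Banach space theory rather than a hand-made Poincar\'e lemma.
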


\noindent The following result characterises weakly null sequences in $L^\infty(\rr^n)$ and is a special case of \cite{Toland}*{Theorem 3.6}.

\begin{proposition}\label{weaklinfty}
    Let $(f_k)_{k=1}^\infty$ be a bounded sequence in $L^\infty(\rr^n)$. The following are equivalent.
    \begin{itemize}
        \item $f_k\cw 0$ in $L^\infty(\rr^n)$.
        \item For every strictly increasing sequence $j_1<j_2<j_3<\cdots$ of natural numbers, $\left \lVert \underset{1\leq k\leq m}{\min} \vert f_{j_k} \vert \right \rVert_\infty\to 0$ as $m\to \infty$.
    \end{itemize}
\end{proposition}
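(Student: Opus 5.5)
The plan is to move the problem from $L^\infty(\rr^n)$ to a space of continuous functions, where weak nullity of bounded sequences has a simple pointwise description. Then the condition on minima becomes a compactness statement.

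\emph{Step 1: representation as $C(K)$.} $L^\infty(\rr^n)$ is a commutative unital $C^*$-algebra. By the Gelfand--Naimark theorem there is a compact Hausdorff space $K$ (its maximal ideal space) and an isometric $*$-isomorphism $f\mapsto \hat f$ onto $C(K)$. A unital $*$-isomorphism between commutative $C^*$-algebras preserves the continuous functional calculus. It therefore preserves $|\cdot|$ and the lattice operations, so
\[\widehat{\min_{1\leq k\leq m}|f_{j_k}|}=\min_{1\leq k\leq m}|\hat f_{j_k}|\quad\text{and}\quad \Bigl\lVert \min_{1\leq k\leq m}|f_{j_k}|\Bigr\rVert_\infty=\sup_{t\in K}\min_{1\leq k\leq m}|\hat f_{j_k}(t)|.\]
Since weak convergence is preserved by linear isomorphisms, it suffices to prove the statement for a bounded sequence $(\hat f_k)$ in $C(K)$, with the sup norm.

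\emph{Step 2: weakly null means pointwise null.} By the Riesz representation theorem, $C(K)^*$ is the space of regular Borel measures on $K$. The dominated convergence theorem then shows that a bounded sequence in $C(K)$ is weakly null if and only if $\hat f_k(t)\to 0$ for every $t\in K$. The forward direction only needs the point evaluations $\delta_t\in C(K)^*$.

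\emph{Step 3: pointwise null is equivalent to the minimum condition.} Suppose first that $\hat f_k(t)\to0$ for every $t$, and fix $j_1<j_2<\cdots$. The quantities $a_m=\sup_K \min_{k\leq m}|\hat f_{j_k}|$ are nonincreasing in $m$. Suppose $a_m\geq\varepsilon>0$ for all $m$. Then the sets $C_m=\{t:\min_{k\leq m}|\hat f_{j_k}(t)|\geq\varepsilon\}$ are closed, hence compact. They are nonempty because the supremum is attained on compact $K$, and they are nested. By the finite intersection property some $t$ lies in all $C_m$, so $|\hat f_{j_k}(t)|\geq\varepsilon$ for all $k$, which contradicts $\hat f_k(t)\to 0$. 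Hence $a_m\to0$.

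Conversely, suppose some $t$ and $\varepsilon>0$ satisfy $|\hat f_k(t)|>\varepsilon$ for infinitely many $k$, and let $j_1<j_2<\cdots$ enumerate these indices. Then $\min_{k\leq m}|\hat f_{j_k}(t)|>\varepsilon$ for every $m$, so $a_m>\varepsilon$ and the minimum condition fails for this subsequence.

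Combining Steps 1--3 gives the equivalence. The only delicate point is Step 1, namely checking that the Gelfand isomorphism is compatible with $|\cdot|$ and $\min$, so that norms of minima really are suprema over $K$. This follows because a unital $*$-homomorphism commutes with continuous functional calculus, and $|g|$ and $\min(|g|,|h|)=\tfrac12(|g|+|h|-\bigl||g|-|h|\bigr|)$ are obtained by such calculus. The remaining steps are routine compactness and dominated convergence arguments.
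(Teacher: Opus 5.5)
Your proof is correct and complete. It differs from the paper only in that the paper gives no proof here: it imports the statement as a special case of Toland's Theorem~3.6, which is stated in greater generality.

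Your argument is a short self-contained proof in three steps:
\begin{itemize}
\item transfer to $C(K)$ by the Gelfand representation;
\item identify weak nullity of bounded sequences with pointwise nullity on $K$, using Riesz representation and dominated convergence;
\item turn pointwise nullity into the condition on minima by a nested-compact-sets argument, with the converse obtained by enumerating the indices where $|\hat f_k(t)|>\varepsilon$.
\end{itemize}
All three steps are sound. So is the compatibility of the Gelfand transform with $|\cdot|$ and with $\min(a,b)=\tfrac12(a+b-|a-b|)$, which is what makes $\bigl\lVert \min_{k\leq m}|f_{j_k}|\bigr\rVert_\infty$ a supremum over $K$.

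Your route gains transparency and generality. Nothing about $\mathbb{R}^n$ is used, so the argument works for $L^\infty(\mu)$ over any measure space, and indeed for any $C(K)$. It also makes the Note after the proposition ($f_k\cw 0\Rightarrow |f_k|\cw 0$) obvious, since $\widehat{|f_k|}=|\hat f_k|$. The citation gains only brevity and Toland's more general formulation.

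One detail to tidy up. The paper's $L^\infty(\mathbb{R}^n)$ consists of real-valued functions, while Gelfand--Naimark concerns complex $C^*$-algebras. You can handle this in either of two ways:
\begin{itemize}
\item Complexify. A real sequence is weakly null in real $L^\infty$ if and only if it is weakly null in complex $L^\infty$: real functionals extend complex-linearly, and complex functionals restricted to real functions split into real and imaginary parts.
\item Use Kakutani's representation of real AM-spaces with unit, which gives real $L^\infty$ directly as a real $C(K)$ with the lattice operations preserved.
\end{itemize}
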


\noindent \textbf{Note:} This implies that if $f_k\cw 0$ in $L^\infty(\rr^n)$, then necessarily $\vert f_k \vert\cw 0$ in $L^\infty(\rr^n)$. \\

\noindent Before the next result, we briefly recall the definition of a doubling metric space.
\begin{definition}
     A metric space $M$ is \emph{doubling} if there exists $k\in \mathbb{N}$ such that for every $r>0$ and every $x\in M$, there exist $x_1, \cdots, x_k\in M$ with $\overline{B}(x, r)\subset \bigcup\limits_{i=1}^k \overline{B}(x_i, \frac{r}{2})$ (i.e. every closed ball can be covered by $k$ closed balls of half its radius).  
\end{definition}
\noindent The construction underlying the following result is the existence of gentle partitions of unity with respect to doubling subspaces, which was proved by Lee and Naor in \cite{LeeNaor}. It follows immediately from \cite{isolipschitzspace}*{Proposition 1.8}, using the fact that any subset of a doubling metric space is doubling. We note that the result holds more generally: \cite{isolipschitzspace} only assumes that $N$ is doubling.
\begin{proposition}[\cite{isolipschitzspace}*{Proposition 1.8}]\label{Complementation} Let $N$ and $M$ be nonempty subsets of a doubling metric space with $N\subset M$. Then $\mathcal{F}(N) \overset{c}{\hookrightarrow} \mathcal{F}(M)$ and $\lip(N) \overset{c}\hookrightarrow \lip(M)$. 
\end{proposition}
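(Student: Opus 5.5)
The plan is to obtain both complementation statements from a single bounded \emph{linear} Lipschitz extension operator $E:\lip(N)\to\lip(M)$ that is also weak$^*$-continuous. Throughout I take the basepoint $0_M\in N$, which costs nothing since isometry classes do not depend on the basepoint. I may also assume $N$ is closed in $M$. Indeed, Lipschitz functions extend uniquely to the closure, so $\lip(N)\cong\lip(\overline N)$ and $\FF(N)\cong\FF(\overline N)$ canonically, and $\overline N$ is still a subset of the ambient doubling space.

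\textbf{Step 1 (the extension operator).} I would use the Lee--Naor gentle partitions of unity \cite{LeeNaor}. On $M\setminus N$ there is a locally finite family of Lipschitz functions $\psi_i\ge 0$ with $\sum_i\psi_i=1$, together with points $n_i\in N$, satisfying:
\begin{itemize}
\item $d(n_i,x)\le C\,d(x,N)$ whenever $x\in\operatorname{supp}\psi_i$;
\item the gentleness inequality $\sum_i|\psi_i(x)-\psi_i(y)|\,d(n_i,x)\le C\,d(x,y)$ for $x,y\in M\setminus N$.
\end{itemize}
Here $C$ depends only on the doubling constant. Set $Ef=f$ on $N$ and $Ef(x)=\sum_i\psi_i(x)f(n_i)$ for $x\in M\setminus N$; this is clearly linear. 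The Lipschitz bound is checked in cases.
\begin{itemize}
\item For $x,y\in M\setminus N$, write
\[Ef(x)-Ef(y)=\sum_i(\psi_i(x)-\psi_i(y))\bigl(f(n_i)-f(n_{x})\bigr),\]
where $n_x\in N$ is a near-closest point to $x$. Then $|f(n_i)-f(n_x)|\le\|f\|\,d(n_i,n_x)\lesssim\|f\|\,d(n_i,x)$ on the relevant supports, and gentleness gives the bound.
\item For $x\notin N$ and $y\in N$, the bound follows from the convex-combination structure together with $d(n_i,y)\le d(n_i,x)+d(x,y)\lesssim d(x,y)$.
\end{itemize}
Hence $\|E\|\le C'$. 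This step is the main obstacle, because all the content sits in the existence of gentle partitions of unity. If I had to produce them myself, I would build them from nested $2^{-j}$-nets via a Whitney-type decomposition of $M\setminus N$, using doubling to bound overlaps. For subsets of $\rr^n$, which is the only case used in this paper, Whitney cubes of $\rr^n\setminus\overline N$ do the job directly.

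\textbf{Step 2 (weak$^*$-continuity).} For each fixed $x\in M$, the functional $f\mapsto Ef(x)$ is a finite linear combination of evaluations $f(n_i)$, by local finiteness. Hence on bounded sets $E$ is continuous from the topology of pointwise convergence to itself. On bounded sets of $\lip$ the pointwise and weak$^*$ topologies agree. By Banach--Dieudonn\'e, $E$ is therefore weak$^*$-continuous, so $E=T^*$ for a bounded linear $T:\FF(M)\to\FF(N)$. Concretely, $T\delta_M(x)=\sum_i\psi_i(x)\delta_N(n_i)$ for $x\notin N$.

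\textbf{Step 3 (projections).} Let $\iota:N\to M$ be the inclusion and $\hat\iota:\FF(N)\to\FF(M)$ its linearisation; $\hat\iota$ is an isometric embedding, since its adjoint is the restriction map $R$ and McShane extension shows $R$ is onto with norm-one lifts. Since $Ef|_N=f$, we have $RE=\mathrm{id}$, so $P=ER$ is a bounded projection of $\lip(M)$ onto $E(\lip(N))\sim\lip(N)$. Dually, $T\hat\iota=\mathrm{id}_{\FF(N)}$, which can be checked on the $\delta_N(x)$. Thus $\hat\iota T$ is a bounded projection of $\FF(M)$ onto $\hat\iota(\FF(N))\cong\FF(N)$. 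This gives $\FF(N)\comp\FF(M)$ and $\lip(N)\comp\lip(M)$.
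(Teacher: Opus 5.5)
Your proposal is correct and follows essentially the route behind the cited result. The paper gives no argument of its own beyond invoking Candido--C\'uth--Doucha (Proposition 1.8), whose underlying construction is exactly yours: a weak$^*$-continuous bounded linear extension operator built from Lee--Naor gentle partitions of unity, whose preadjoint yields the projection on the free spaces. As in the paper, the one non-elementary input is the existence of gentle partitions of unity for doubling spaces, which you correctly identify and take from Lee--Naor.
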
 

\noindent There are various equivalent definitions of the Dunford--Pettis property. The one we shall use is as follows.

\begin{definition}\label{dunfordpettisdef}
        Let $X$ be a Banach space. $X$ has the \emph{Dunford--Pettis property} if for every weakly null sequence $(x_n)_{n=1}^\infty$ in $X$ and every weakly null sequence $(f_n)_{n=1}^\infty$ in $X^\star$, $f_n(x_n)\to 0$ as $n\to \infty$.
\end{definition}

\noindent \textbf{Note:} An equivalent formulation is that a Banach space $X$ has the Dunford--Pettis property if and only if for every Banach space $Y$ and every weakly compact $T:X\to Y$, $T$ maps weakly compact subsets of $X$ to norm compact subsets of $Y$. If $X$ is an infinite-dimensional reflexive space, then $B_X$ is weakly compact, so $\mathrm{id}_X:X\to X$ is weakly compact. However, since $B_X$ is not norm compact, we deduce that $X$ fails the Dunford--Pettis property. \\   

\noindent The next result collects two classical facts on the Dunford--Pettis property. After the statement, we briefly explain why both are true. 

\begin{lemma}\label{grothendieck}
    (i) Let $X$ be a Banach space with the Dunford--Pettis property, and $Y$ be another Banach space such that $Y\comp X$. Then $Y$ has the Dunford--Pettis property. \\
    (ii) Let $(X_n)_{n\geq 1}$ be a sequence of Banach spaces, each of which has the Dunford--Pettis property. Then $\left (\bigoplus\limits_{n=1}^\infty X_n \right )_{\ell_1}$ has the Dunford--Pettis property.
\end{lemma}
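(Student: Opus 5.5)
Both parts can be checked directly from Definition \ref{dunfordpettisdef}, using only that bounded linear maps and their adjoints preserve weak nullity.

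For (i), we fix an isomorphism $T:Y\to Z$ onto a subspace $Z\subset X$ and a bounded projection $P:X\to Z$. Let $(y_n)$ be weakly null in $Y$ and $(g_n)$ weakly null in $Y^\star$. Set $x_n:=Ty_n\in X$; this sequence is weakly null in $X$. Set $f_n:=g_n\circ T^{-1}\circ P\in X^\star$, where $T^{-1}:Z\to Y$. Then $f_n=(T^{-1}P)^\star g_n$, and adjoints of bounded operators are weak-to-weak continuous, so $(f_n)$ is weakly null in $X^\star$. Since $PTy_n=Ty_n$, we get $f_n(x_n)=g_n(T^{-1}PTy_n)=g_n(y_n)$. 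The Dunford--Pettis property of $X$ then gives $g_n(y_n)\to 0$.

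For (ii), write $X=(\bigoplus X_k)_{\ell_1}$, so that $X^\star=(\bigoplus X_k^\star)_{\ell_\infty}$. Let $(x_n)$ be weakly null in $X$ and $(f_n)$ weakly null in $X^\star$, and suppose for contradiction that $|f_n(x_n)|\ge\varepsilon$ along a subsequence. Both sequences are bounded, say by $C$. Let $P_m$ denote the projection onto the first $m$ coordinates.

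\textbf{Finite part.} For each fixed $m$, the space $P_mX=(X_1\oplus\cdots\oplus X_m)_{\ell_1}$ has the Dunford--Pettis property. Indeed, a finite sum of weakly null sequences is controlled coordinatewise: the restriction of $f_n$ to $P_mX$ is weakly null coordinatewise, and $P_m x_n$ is weakly null coordinatewise. So
\[f_n(P_mx_n)=\sum_{k\le m} f_n^{(k)}(x_n^{(k)})\to 0,\]
where each coordinate term tends to $0$ by the Dunford--Pettis property of $X_k$.

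\textbf{Tail part.} The remaining task, which is the main obstacle, is to control the tails $f_n((I-P_m)x_n)$. These are not uniformly small in general. The standard remedy is a gliding hump argument.
\begin{enumerate}
\item Pass to subsequences and choose increasing integers $m_1<m_2<\cdots$ such that $\|(I-P_{m_{j}})x_{n_i}\|<\varepsilon/(4C)$ for all $i\le j$. This is possible because each fixed vector has an $\ell_1$ tail tending to zero.
\item Also arrange $|f_{n_j}(P_{m_{j-1}}x_{n_j})|<\varepsilon/4$, using the finite part above.
\item It follows that $|f_{n_j}(u_j)|\ge\varepsilon/2$, where $u_j:=(P_{m_j}-P_{m_{j-1}})x_{n_j}$ is supported on a block and is, up to a small error, the same as $x_{n_j}$ on that block.
\end{enumerate}
Since the blocks $u_j$ are disjointly supported, $(u_j)$ is still weakly null. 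We would verify this by showing it is the image of $(x_{n_j})$ under a block-diagonal bounded projection, up to a norm-null perturbation.

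\textbf{Closing the argument.} I would first try to pass to weak nullity in the product coordinates. Let $h_j$ be the restriction of $f_{n_j}$ to block $j$, extended by zero. Then $h_j=Q_j^\star f_{n_j}$ where $Q_j=P_{m_j}-P_{m_{j-1}}$, but this is not a single operator, so weak nullity of $(h_j)$ is not automatic. This is the delicate point. The standard way around it, as in Grothendieck's original proof, is to use the equivalent formulation instead: a space has the Dunford--Pettis property if and only if every weakly compact operator $T:X\to Y$ sends weakly Cauchy (or weakly null) sequences to norm null ones. With that formulation, $T$ restricted to the $\ell_1$-sum, combined with the observation that a weakly null sequence in an $\ell_1$-sum has uniformly small tails modulo a gliding hump, gives $\|Tx_n\|\to 0$. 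The gliding hump is ruled out because disjoint blocks of norm at least $\delta$ in an $\ell_1$-sum span $\ell_1$ and cannot be weakly null. Concretely, the $u_j$ would satisfy an $\ell_1$ lower estimate $\|\sum a_ju_j\|=\sum|a_j|\|u_j\|$, so norming functionals with signs give a contradiction to weak nullity unless $\|u_j\|\to0$. This last step is the one I expect to require the most care, and would simply cite Grothendieck's classical argument if needed.
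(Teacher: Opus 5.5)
\textbf{Part (i).} This is correct and is the paper's argument. You use an explicit isomorphism $T$ and projection $P$ instead of assuming $X=Y\oplus_1 Z$, and $f_n=(T^{-1}P)^\star g_n$ is exactly what is needed.

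\textbf{Part (ii).} There is a genuine gap at the step you flag yourself. Your contradiction needs the block sequence $u_j=(P_{m_j}-P_{m_{j-1}})x_{n_j}$ to be weakly null, and your justification does not work. Each $u_j$ comes from a \emph{different} operator $Q_j$ applied to $x_{n_j}$. To write it as one fixed operator applied to $x_{n_j}$ up to a norm-null error, you would need the heads $P_{m_{j-1}}x_{n_j}$ to be norm small. Your construction only makes $f_{n_j}(P_{m_{j-1}}x_{n_j})$ small, and the heads need not be norm small: take $X_1=c_0$ and $x_n=(e_n,0,0,\ldots)$.

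For the same reason, a block-norming functional $\Psi$ evaluated on $x_{n_j}$ rather than on $u_j$ picks up uncontrolled head terms $\Psi(P_{m_{j-1}}x_{n_j})$. Switching to the weakly-compact-operator formulation and citing Grothendieck does not supply the missing step; it only moves it. Your assertion that the tails are ``not uniformly small in general'' is false, and its negation is exactly the missing lemma.

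\textbf{The missing lemma and the finish.} The paper uses the following: if $(x_n)$ is weakly null in $(\bigoplus X_k)_{\ell_1}$, then $\sup_n\|(I-P_m)x_n\|\to 0$ as $m\to\infty$. It is proved by a gliding hump in which the heads are controlled by the order of the choices.
\begin{itemize}
\item Suppose the limit is $2\eta>0$. Then for every $m$, infinitely many $n$ satisfy $\|(I-P_m)x_n\|>\eta$.
\item Put $m_0=0$. Suppose $m_{j-1}$ and $\phi_k\in B_{X_k^\star}$ for $k\le m_{j-1}$ are already fixed.
\item Choose $n_j$ with $\|(I-P_{m_{j-1}})x_{n_j}\|>\eta$ and $\bigl|\sum_{k\le m_{j-1}}\phi_k(x_{n_j}^{(k)})\bigr|<\eta/4$. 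This is possible because only finitely many coordinate sequences, each weakly null, are involved.
\item Choose $m_j>m_{j-1}$ with $\|(I-P_{m_j})x_{n_j}\|<\eta/4$.
\item For $m_{j-1}<k\le m_j$, take $\phi_k\in B_{X_k^\star}$ with $\phi_k(x_{n_j}^{(k)})=\|x_{n_j}^{(k)}\|$.
\end{itemize}
Then $\Phi=(\phi_k)_k\in B_{X^\star}$ satisfies $\Phi(x_{n_j})>-\eta/4+3\eta/4-\eta/4=\eta/4$ for every $j$, contradicting weak nullity of $(x_n)$.

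With this lemma, $|f_n(x_n)|\le |f_n(P_mx_n)|+\bigl(\sup_l\|f_l\|\bigr)\sup_l\|(I-P_m)x_l\|$. Your finite-part argument then finishes the proof directly, with no need for $(u_j)$ or the operator reformulation.
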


\noindent To see why $(i)$ holds, as the Dunford--Pettis property is an isomorphism invariant, we may assume that $X=Y\oplus_1 Z$ for some Banach space $Z$. Then if $\pi_Y:X\to Y$ denotes the projection onto $Y$, any weakly null sequence $(f_n)_{n=1}^\infty$ in $Y^\star$ may be extended to the weakly null sequence $(f_n\circ \pi_Y)_{n=1}^\infty$ in $X^\star$. So if $(y_n)_{n\geq 1}$ is weakly null in $Y$, it is also weakly null in $X$, and so $f_n(y_n)=(f_n\circ \pi_Y)(y_n)\to 0$ as $n\to \infty$. Thus the Dunford--Pettis property of $X$ implies the same for $Y$. A sketch of the proof of $(ii)$ is as follows. Suppose that $(x_k)_{k=1}^\infty=((x_{k, n})_{n=1}^\infty)_{k=1}^\infty$ is weakly null in $\left (\bigoplus\limits_{n=1}^\infty X_n \right )_{\ell_1}$ and $(f_k)_{k=1}^\infty=((f_{k, n})_{n=1}^\infty)_{k=1}^\infty$ is weakly null in $\left (\bigoplus\limits_{n=1}^\infty X_n^* \right )_{\ell_\infty}$. It follows that $\underset{k}{\sup}\sum\limits_{n>N}\lVert  x_{k, n} \rVert\to 0$ as $N\to \infty$. To see this, argue by contradiction and use a gliding hump argument together with suitable norming functionals on successive coordinate blocks to obtain a contradiction to weak nullity of $(x_k)_{k=1}^\infty$. For every fixed $n$, $(f_{k, n})_{k=1}^\infty$ is weakly null in $X_n^\star$ and $(x_{k, n})_{k=1}^\infty$ is weakly null in $X_n$, so $f_{k, n}(x_{k, n})\to 0$ as $k\to \infty$ by the Dunford--Pettis property of $X_n$. Since the tails $\sum\limits_{n>N} \lVert x_{k, n} \rVert$ can be taken as small as desired uniformly in $k$, and $\underset{k, n}{\sup}\lVert f_{k, n} \rVert=\underset{k}{\sup}\lVert f_k \rVert<\infty$, it follows that $f_k(x_k)=\sum\limits_{n=1}^\infty f_{k, n}(x_{k, n})\to 0$ as $k\to \infty$, as desired. \\

\noindent The next result is a consequence of the isometric lifting property for separable Banach spaces.

\begin{lemma}\label{lifting}
    Let $X$ be a separable Banach space failing the Dunford--Pettis property. Then the Lipschitz-free space $\FF(X)$ also fails the Dunford--Pettis property.
\end{lemma}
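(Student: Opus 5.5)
The plan is to use the Godefroy--Kalton isometric lifting property for separable Banach spaces together with part (i) of Lemma \ref{grothendieck}. Recall the barycentre map $\beta:\FF(X)\to X$. It is the unique bounded linear map with $\beta(\delta_X(x))=x$ for all $x\in X$, obtained from the universal property applied to the identity map $X\to X$ (a $1$-Lipschitz map fixing $0$, with $X$ viewed as a metric space). Thus $\lVert\beta\rVert=1$.

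The Godefroy--Kalton theorem states that when $X$ is separable there is a linear isometry $T:X\to\FF(X)$ with $\beta\circ T=\mathrm{id}_X$. I will simply invoke this theorem, since it is the "isometric lifting property" referred to in the sentence preceding the lemma. Given $T$, the operator $P=T\circ\beta$ is a bounded projection on $\FF(X)$, because $P^2=T(\beta T)\beta=T\beta=P$. Its range is $T(X)$, which is isometric to $X$. Hence $X\comp\FF(X)$.

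Now argue by contraposition. Suppose $\FF(X)$ had the Dunford--Pettis property. Since $X\comp\FF(X)$, Lemma \ref{grothendieck}(i) would give that $X$ has the Dunford--Pettis property, contradicting the hypothesis. So $\FF(X)$ fails the Dunford--Pettis property.

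The only nontrivial ingredient is the existence of the linear lifting $T$. This is where separability is used: Godefroy and Kalton build $T$ by a Gâteaux-differentiability and averaging argument over a dense sequence of finite-dimensional subspaces. Since it is a known theorem, I would cite it rather than reprove it, so I expect no real obstacle beyond checking that the projection $P$ is well defined, which is the routine computation above.
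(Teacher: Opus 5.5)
Your proposal is correct and matches the paper's proof. The paper likewise invokes the Godefroy--Kalton isometric lifting theorem to obtain $X\comp\FF(X)$, which you make explicit via the projection $T\circ\beta$, and then applies Lemma \ref{grothendieck}(i) to pass the failure of the Dunford--Pettis property from $X$ to $\FF(X)$.
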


\begin{proof}
     Godefroy and Kalton proved that every separable Banach space $X$ has the isometric lifting property (see \cite{godefroykalton}*{Theorem 3.1}); in particular, $X\comp \FF(X)$. Therefore if $X$ is separable and fails the Dunford--Pettis property, then by Lemma \ref{grothendieck} (i), $\FF(X)$ must also fail the Dunford--Pettis property.
\end{proof}

\section{Proofs of the main results} 

\noindent We now prove Theorem \ref{dunfordpettis}, restated below for convenience. The core of the proof closely follows a beautiful construction of Bourgain, with some small modifications. Bourgain's construction can be found in the proof of \cite{bourgain}*{Proposition 2}. We encourage the reader to look at his argument; we use his notation at several points throughout to highlight the many similarities.

\thmone*

\begin{proof}
    We first make some simple reductions. Since the Dunford--Pettis property is an isomorphism invariant,  by Proposition \ref{nonemptyinterior}, it suffices to show that $\FF((-1, 1)^d)$ has the Dunford--Pettis property. Let $N$ be the closed subspace of $L^1((-1, 1)^d;\rr^d)$ consisting of all $\textbf{g}$ such that $\mathrm{div}(\textbf{g})=0$ in $\mathcal{D}'(\rr^d)$. By Proposition \ref{representation}, \[\FF((-1, 1)^d)\cong L^1((-1, 1)^d;\rr^d)/N,\] and by Proposition \ref{nperp}, it follows that the map \begin{equation}\label{iso} p\mapsto \nabla p:\lip((-1, 1)^d)\to N^\perp\end{equation} is an isometric isomorphism. Since the weak topology on $N^\perp$ is precisely the restriction to $N^\perp$ of the weak topology on $L^\infty((-1, 1)^d;\rr^d)$, it suffices by Definition \ref{dunfordpettisdef} to show that the following holds. \\
    
    \noindent \underline{\textbf{(A):}} Let $(\textbf{f}_k)_{k=1}^\infty$ be a sequence in $L^1((-1, 1)^d;\rr^d)$, $(p_k)_{k=1}^\infty$ be a sequence in $\lip((-1, 1)^d)$ and assume that both of the following hold.
    \begin{itemize}
        \item\begin{equation}\label{cond1} \forall p\in \lip((-1, 1)^d), \ \displaystyle\int\limits_{(-1, 1)^d} \textbf{f}_k \cdot \nabla p \ dx\to 0 \text{ as } k\to \infty. \end{equation}
        \item \begin{equation}\label{cond2} \nabla p_k \cw 0 \text{ in } L^\infty((-1, 1)^d ;\rr^d) \text{ as } k\to \infty. \end{equation}
    \end{itemize}
    \noindent Then \[\displaystyle\int\limits_{(-1, 1)^d} \textbf{f}_k \cdot \nabla p_k \ dx\to 0\] as $k\to \infty$. \\
    
    \noindent We argue by contradiction and assume that \textbf{(A)} is false. Then, by passing to a subsequence and replacing each $p_k$ by $-p_k$ when necessary, we can find $\delta>0$ and sequences $(\textbf{f}_k)_{k=1}^\infty\subset L^1((-1, 1)^d;\rr^d)$ and $(p_k)_{k=1}^\infty \subset \lip((-1, 1)^d)$ satisfying \eqcref{cond1} and \eqcref{cond2}, such that \[\forall k, \ \int\limits_{(-1, 1)^d} \textbf{f}_k \cdot \nabla p_k \ dx>\delta.\] Since the weakly null sequence $(\textbf{f}_k+N)_{k=1}^\infty$ is bounded in $L^1((-1, 1)^d;\rr^d)/N$, after replacing each $\textbf{f}_k$ by $c\textbf{f}_k$ for some suitable fixed $c>0$ and adjusting $\delta$, we may assume that $\lVert \textbf{f}_k+N \rVert<1$ for every $k$. Recall that for any Banach space $X$ and closed subspace $Y$ of $X$, $q(D_X)=D_{X/Y}$ where $q:X\to X/Y$ is the quotient map. Thus, by replacing $\textbf{f}_k$ with a suitable representative of the coset $\textbf{f}_k+N$ (which does not change any of the pairings with $\nabla p$ for $p\in \lip((-1, 1)^d)$), we may without loss of generality assume that for all $k$, $\lVert \textbf{f}_k \rVert_{L^1((-1, 1)^d;\rr^d)}\leq 1$. Let $E: \lip((-1, 1)^d)\to \{g\in\lip(\rr^d):\mathrm{supp}(g)\subset [-2, 2]^d\}$ be a bounded linear extension operator. To justify the existence of $E$, let $r:\rr^d\to [-1, 1]^d$ be a Lipschitz retraction, and $\chi:\rr^d\to \rr$ be Lipschitz such that $\chi=1$ on $[-1, 1]^d$ and $\chi=0$ outside $[-2, 2]^d$. Then for $p\in \lip((-1, 1)^d)$, set $(Ep)(x)=\chi(x) \overline{p}(r(x))$, where $\overline{p}$ denotes the unique Lipschitz extension of $p$ to $[-1, 1]^d$. \\
    
    \noindent We have that $p_k\cw 0$ in $\lip((-1, 1)^d)$ by the isometric identification \eqcref{iso}. This implies that $Ep_k\cw 0$ in $\lip(\rr^d)$, and therefore $\nabla (Ep_k)\cw 0$ in $L^\infty(\rr^d;\rr^d)$ (since both $E$ and $h\mapsto \nabla h:\lip(\rr^d)\to L^\infty(\rr^d;\rr^d)$ are bounded and hence $w$-to-$w$ continuous). The weakly null sequence $(Ep_k)_{k=1}^\infty$ is bounded in $\lip(\rr^d)$, so after multiplying the sequence $(p_k)_{k=1}^\infty$ by a fixed positive constant and adjusting $\delta$, we may assume that $\lVert Ep_k \rVert_{\Lip}\leq 1$ for every $k$. Then, by extending $\textbf{f}_k$ by $0$ outside $(-1, 1)^d$ and replacing $p_k$ with $Ep_k$, we can find sequences $(\textbf{f}_k)_{k=1}^\infty\subset B_{L^1(\rr^d;\rr^d)}$ and $(p_k)_{k=1}^\infty\subset B_{\lip(\rr^d)}$ such that the following hold. The first bullet point follows from \eqcref{cond1}, since $\textbf{f}_k$ is extended by $0$ and each $p\in \lip(\rr^d)$ restricts to an element of $\lip((-1, 1)^d)$. The fourth bullet point follows from $Ep=p$ on $(-1, 1)^d$ and the adjustment of $\delta$ after scaling.
    \begin{itemize}
        \item \begin{equation}\label{2.3}
            \forall p\in \lip(\rr^d), \dd \textbf{f}_k\cdot \nabla p \ dx \to 0 \text{ as }k\to \infty.
        \end{equation}
        \item \begin{equation*}\label{2.4}
            \nabla p_k \cw 0 \text{ in }L^\infty(\rr^d;\rr^d) \text{ as }k\to \infty.
        \end{equation*}
        \item \begin{equation*} \label{2.5}
            \forall k, \ \mathrm{supp}(p_k)\subset [-2, 2]^d.
        \end{equation*}

        \item \begin{equation*} \label{2.6}
            \forall k, \ \dd \textbf{f}_k \cdot \nabla p_k \ dx>\delta.
        \end{equation*}
    \end{itemize}

\noindent From now on, $N$ will denote the closed subspace of $L^1(\rr^d;\rr^d)$ consisting of the divergence-free integrable vector fields. We have that $p\mapsto \nabla p:\lip(\rr^d)\to L^\infty(\rr^d;\rr^d)$ is an isometric isomorphism onto $N^\perp$. For a Lipschitz function $p$ on $\rr^d$, $\left \lVert \nabla p \right \rVert$ will denote the Euclidean norm of the gradient of $p$, which exists almost everywhere on $\rr^d$. Since the weak topology on $N^\perp$ agrees with the restriction to $N^\perp$ of the weak topology on $L^\infty(\rr^d;\rr^d)$, it follows that $p_k\cw 0$ in $\lip(\rr^d)$. For any $x\in \rr^d$, $p\mapsto p(x)$ defines an element of $\lip(\rr^d)^*$, so we have that $p_k\to 0$ pointwise on $\rr^d$. Since each $p_k$ is $1$-Lipschitz and is $0$ outside of $[-2, 2]^d$, pointwise convergence to $0$ on arbitrarily fine finite nets in $[-2, 2]^d$ implies that $p_k\to 0$ uniformly on $\rr^d$. For each $1\leq i\leq d$, $D_i p_k\cw 0$ in $L^\infty(\rr^d)$, and so by the note following Proposition \ref{weaklinfty}, we have that $\vert D_i p_k|\cw 0$ in $L^\infty(\rr^d)$. Therefore $\sum\limits_{i=1}^d \vert D_i p_k \vert\cw 0$ in $L^\infty(\rr^d)$. Since $0\leq\lVert \nabla p_k \rVert\leq \sum\limits_{i=1}^d \vert D_i p_k \vert$ almost everywhere, the criterion in Proposition \ref{weaklinfty} implies that \[\label{2.7}\refstepcounter{equation}\lVert \nabla p_k \rVert\cw 0 \text{ in }L^\infty(\rr^d). \tag{\theequation}\] 

\noindent Let $\phi\in C_c^\infty(\rr^d)$ be such that $\phi\geq 0$, $\mathrm{supp}(\phi)\subset \overline{B}(0, 1)$ and $\lVert \phi \rVert_1=1$. For $\varepsilon>0$, $x\in \rr^d$, let $\phi_{\varepsilon}(x)=\frac{1}{\varepsilon^d}\phi\left(\frac{x}{\varepsilon}\right)$. If $\textbf{f}\in L^1(\rr^d;\rr^d)$ and $p\in \lip(\rr^d)$, then $p\star \phi_\varepsilon$ is Lipschitz with $\nabla (p\star \phi_\varepsilon)=(\nabla p)\star \phi_\varepsilon$. By the Lebesgue differentiation theorem, it follows that $\nabla(p\star \phi_\varepsilon)\to \nabla p$ almost everywhere on $\rr^d$ as $\varepsilon\to 0$. Since $\nabla (p\star \phi_\varepsilon)$ is bounded in norm pointwise by $\lVert \nabla p \rVert_\infty$, dominated convergence gives that \[\label{2.8}\refstepcounter{equation}\dd \textbf{f} \cdot \nabla(p\star \phi_\varepsilon) \ dx\to \dd \textbf{f} \cdot \nabla p \ dx \text{ as }\varepsilon\to 0. \tag{\theequation}\]

\noindent\underline{\textbf{An inductive construction}} \\

\noindent From here, we follow Bourgain's argument closely, with some small changes. Let $(\varepsilon_k)_{k=1}^\infty \subset (0, \infty)$ be such that $\sum\limits_{k=1}^\infty \varepsilon_k<\delta/2$. We will inductively find $n_1<n_2<\cdots$ in $\mathbb{N}$, mollification parameters $(\eta_k)_{k=1}^\infty$, smooth Lipschitz functions $y_k=p_{n_k}\star \phi_{\eta_k}$, smooth Lipschitz cut-off functions $\psi_k:\mathbb{R}^d\to [0, 1]$ and a decreasing sequence $(N_k)_{k=1}^\infty$ of infinite subsets of $\mathbb{N}$. These will have the following properties. First, \[n_k\in N_{k-1} \text{ for every } k\geq 2.\] Second, for every $k$, \[\psi_k=1 \text{ on } \{x:\lVert \nabla y_k(x) \rVert\geq 2\varepsilon_k\}, \qquad \psi_k=0 \text{ on } \{x:\lVert \nabla y_k(x) \rVert\leq \varepsilon_k\}.\] Third, writing $P_0=1$ and  \[P_k=(1-\psi_1)\cdots(1-\psi_k) \qquad (k\in \mathbb{N}),\] we will have
\[\forall k\in \mathbb{N}, \forall n\in N_{k}, \forall \eta>0, \left \vert\int_{\mathbb R^d}\textbf{f}_n\cdot\nabla\left(\psi_kP_{k-1}(p_n\star\phi_\eta)\right)\ dx \right\vert<\varepsilon_{k}.
\] Fourth, \[\lVert \nabla P_{k-1} \rVert_\infty \lVert y_k \rVert_\infty<\varepsilon_k \text{ for every }k\in \mathbb{N}.\] Finally, for every $k\in \mathbb{N}$, \[\dd \textbf{f}_{n_k}\cdot \nabla y_k \ dx>\delta.\]  Call this collection of properties (H). \\

\noindent The construction will ensure that, upon setting \[x_k=(1-\psi_1)\cdots(1-\psi_{k-1})y_k \qquad(k\in \mathbb{N}),\] the sequence $(\nabla x_k)_{k=1}^\infty$ is dominated by the unit vector basis of $c_0$, while the diagonal pairings \[\dd \textbf{f}_{n_k}\cdot \nabla x_k \ dx\]remain bounded away from $0$.  \\ 

\noindent\emph{Base case:} By Mazur's theorem, \eqcref{2.7} implies that there exist a finite subset $D_1$ of $\mathbb{N}$ and nonnegative coefficients $(\lambda_m)_{m\in D_1}$ such that $\sum\limits_{m\in D_1} \lambda_m=1$ and $\left \lVert \sum\limits_{m\in D_1}\lambda_m \lVert \nabla p_m \rVert \right \rVert_\infty<\varepsilon_1^2$. We may without loss of generality assume that $\lambda_m>0$ for all $m\in D_1$. By \eqcref{2.8}, we may choose $\eta_1>0$ so that, setting $q_m=p_m\star \phi_{\eta_1}$ for each $m\in D_1$, we have \[\label{2.9}\refstepcounter{equation}\dd \textbf{f}_m \cdot \nabla q_m \ dx>\delta \qquad \text{for every } m\in D_1. \tag{\theequation}\] Since $\nabla q_m=(\nabla p_m)\star \phi_{\eta_1}$ for each $m\in D_1$, we have, for every $x\in \rr^d$, \[\lVert \nabla q_m(x)\rVert\leq \left( \lVert\nabla p_m\rVert\star \phi_{\eta_1} \right )(x).\] Thus \[\sum\limits_{m\in D_1}\lambda_m \lVert \nabla q_m \rVert\leq \left( \sum\limits_{m\in D_1} \lambda_m \lVert \nabla p_m \rVert\right )\star \phi_{\eta_1}\] pointwise, and therefore \[\left \lVert \sum\limits_{m\in D_1} \lambda_m \lVert \nabla q_m \rVert \right \rVert_\infty\leq \left \lVert \sum\limits_{m\in D_1} \lambda_m \lVert \nabla p_m \rVert \right \rVert_\infty<\varepsilon_1^2.\] For each $m\in D_1$, $q_m\in C_c^\infty(\rr^d)$, and therefore $C_{1, m}=\{x:\lVert \nabla q_m(x) \rVert\geq 2\varepsilon_1\}$ and $C_{2, m}=\{x: \lVert\nabla q_m(x) \rVert\leq \varepsilon_1\}$ are disjoint closed sets, with $C_{1, m}$ compact. Thus $d(C_{1, m}, C_{2, m})>0$, so we may choose a smooth Lipschitz function $\tau_m:\rr^d\to [0, 1]$ such that $\tau_m=1$ on $C_{1, m}$ and $\tau_m=0$ on $C_{2, m}$. For any $x\in \rr^d$, 
\[\sum\limits_{m\in D_1} \lambda_m \tau_m(x)\leq \frac{1}{\varepsilon_1}\sum\limits_{m\in D_1} \lambda_m \lVert \nabla q_m(x) \rVert,\] and therefore $\lVert \sum\limits_{m\in D_1} \lambda _m \tau_m\rVert_\infty<\varepsilon_1$. \\

\noindent For $n\in\mathbb N$, $\eta>0$ and $m\in D_1$, set
\[I_{n,m}(\eta)=\int_{\mathbb R^d}
    \mathbf f_n\cdot
    \nabla\left(\tau_m(p_n\star\phi_\eta)\right)\,dx.\]
Put \[a=\left\lVert\sum\limits_{m\in D_1}\lambda_m\tau_m\right\rVert_\infty
<\varepsilon_1, \qquad C=\sum_{m\in D_1}\lambda_m\lVert \nabla \tau_m \rVert_\infty.\]
By the product rule and the inequalities
\[\lVert p_n\star\phi_\eta\rVert_\infty\leq\lVert p_n\rVert_\infty, \qquad \lVert (\nabla p_n)\star\phi_\eta\rVert _\infty \leq\|\nabla p_n\|_\infty,
\]
we have
\[
\begin{aligned}
\sum_{m\in D_1}\lambda_m \sup_{\eta>0}|I_{n,m}(\eta)|
&\leq 
\lVert \nabla p_n\rVert_\infty \int_{\mathbb R^d}  \lVert \textbf{f}_n\rVert \sum_{m\in D_1}\lambda_m\tau_m\ dx +\lVert p_n\rVert_\infty \int_{\mathbb R^d}
    \lVert \textbf{f}_n\rVert
    \sum_{m\in D_1}\lambda_m\lVert \nabla\tau_m\rVert\ dx \\
&\leq \lVert \textbf{f}_n\rVert_{L^1(\rr^d;\rr^d)}\left(a\lVert\nabla p_n\rVert_\infty+C\lVert p_n\rVert_\infty\right) \\
&\leq a+C\lVert p_n\rVert_\infty.
\end{aligned}
\] \noindent In the last inequality we used that $\lVert \textbf{f}_n \rVert_{L^1(\rr^d;\rr^d)}\leq 1$ and $\lVert \nabla p_n \rVert_\infty\leq 1$.
Since $\lVert p_n\rVert_\infty\to 0$ and $a<\varepsilon_1$, it follows that for all sufficiently large $n$,
\[\sum_{m\in D_1}\lambda_m \sup_{\eta>0} \left \vert\int_{\mathbb R^d}    \textbf{f}_n\cdot \nabla\left(\tau_m(p_n\star\phi_\eta)\right )\ dx \right \vert<\varepsilon_1.\]
\noindent For each such $n$, there exists $m\in D_1$ such that \[\sup_{\eta>0} \left \vert\int_{\mathbb R^d}    \textbf{f}_n\cdot \nabla\left(\tau_m(p_n\star\phi_\eta)\right )\ dx \right \vert<\varepsilon_1.\] Since $D_1$ is finite, by the pigeonhole principle we can choose $n_1\in D_1$ and an infinite set $N_1\subset \mathbb{N}$ such that 
\[\forall n\in N_1, \forall \eta>0, \left \vert  \dd \textbf{f}_n \cdot \nabla \left( \tau_{n_1}(p_n\star \phi_\eta) \right ) \ dx\right \vert<\varepsilon_1.\]
\noindent Set $\psi_1=\tau_{n_1}$ and $y_1=q_{n_1}$. This constructs $n_1$, $\eta_1$, $y_1$, $\psi_1$ and $N_1$, and we note that the properties (H) hold. The first property is vacuous. For the second, note that \[\psi_1=\tau_{n_1}=1 \text{ on }\{x:\lVert \nabla q_{n_1}(x) \rVert\geq 2\varepsilon_1\}=\{x:\lVert \nabla y_1(x) \rVert\geq 2\varepsilon_1\},\] and similarly $\psi_1=0$ on $\{x:\lVert \nabla y_1(x) \rVert\leq \varepsilon_1\}$. The third property was justified above, since $P_0=1$ and $\psi_1=\tau_{n_1}$. The fourth property holds since $\nabla P_0=0$. The fifth property holds by \eqcref{2.9} since $y_1=q_{n_1}$ and $n_1\in D_1$. \\

\noindent\emph{Inductive step:} Assume that the construction has been carried out up to step $k$ and properties (H) hold up to step $k$. Since $1-\psi_j$ is bounded and Lipschitz for each $1\leq j\leq k$, $P_k=(1-\psi_1)\cdots(1-\psi_k)$ is Lipschitz. Since $\lVert p_n \rVert_\infty\to 0$, there exists $J_{k+1}\in \mathbb{N}$ with $J_{k+1}>n_k$ such that \[\left \lVert \nabla P_k\right \rVert_\infty \lVert p_n \rVert_\infty<\varepsilon_{k+1}\text{ for all }n>J_{k+1}.\] As before, by applying Mazur's theorem to the weakly null subsequence $(\lVert \nabla p_n \rVert)_{\substack{n\in N_k \\ n>J_{k+1}}}$, we can find a finite set $D_{k+1}\subset N_k\setminus\{1, 2, \cdots, J_{k+1}\}$ and strictly positive coefficients $(\lambda_m)_{m\in D_{k+1}}$ with $\sum\limits_{m\in D_{k+1}} \lambda_m=1$, such that $\left \lVert \sum\limits_{m\in D_{k+1}} \lambda_m \lVert \nabla p_m \rVert\right \rVert_\infty<\varepsilon_{k+1}^2$. By \eqcref{2.8}, we may choose $\eta_{k+1}>0$ so that, setting $q_m=p_m \star \phi_{\eta_{k+1}}$ for each $m\in D_{k+1}$, we have \[\label{2.10}\refstepcounter{equation}\dd \textbf{f}_m \cdot \nabla q_m \ dx>\delta \qquad \text{ for every } m\in D_{k+1}. \tag{\theequation}\] As before, for each $m\in D_{k+1}$, let $\tau_m:\rr^d\to [0, 1]$ be smooth and Lipschitz such that \[\tau_m=1 \text{ on } \{x:\lVert \nabla q_m(x) \rVert\geq 2\varepsilon_{k+1}\}, \qquad \tau_m=0 \text{ on } \{x:\lVert \nabla q_m(x) \rVert\leq \varepsilon_{k+1}\}.\] Then, exactly as in the base case, $\left \lVert \sum\limits_{m\in D_{k+1}}\lambda_m \tau_m \right\rVert_{\infty}<\varepsilon_{k+1}$. For $n\in N_k\setminus\{1,\cdots,J_{k+1}\}$, $\eta>0$, and $m\in D_{k+1}$, set \[I_{n,m}(\eta)=\int_{\mathbb R^d}
\mathbf f_n\cdot\nabla\left(\tau_m P_k(p_n\star\phi_\eta)\right)\ dx.\]
Put\[C=\sum_{m\in D_{k+1}}\lambda_m\left \lVert \nabla \tau_m\right \rVert_\infty.\]
By the product rule,
\[\begin{aligned}
\vert I_{n,m}(\eta)\vert
&\leq \int_{\mathbb R^d}\lVert \textbf{f}_n \rVert\ \tau_m P_k
\lVert(\nabla p_n)\star\phi_\eta\rVert \ dx \\
&\quad +\int_{\mathbb R^d} \lVert \textbf{f}_n \rVert\,\tau_m \lVert\nabla P_k\rVert\, |p_n\star\phi_\eta|\ dx \\
&\quad +\int_{\mathbb R^d} \lVert \textbf{f}_n \rVert\,\lVert \nabla\tau_m\rVert P_k\, |p_n\star\phi_\eta|\,dx.
\end{aligned}
\]
We have 
\[\lVert(\nabla p_n)\star\phi_\eta\rVert_\infty \leq \lVert \nabla p_n\rVert_\infty, \qquad \lVert p_n\star\phi_\eta\rVert _\infty \leq
\lVert p_n\rVert_\infty.
\]
Hence, using that $0\leq P_k\leq 1$, we obtain
\[
\begin{aligned}
\sum_{m\in D_{k+1}}\lambda_m
\sup_{\eta>0}\vert I_{n,m}(\eta)\vert
&\leq \lVert \nabla p_n\rVert_\infty \int_{\mathbb R^d} \lVert \textbf{f}_n \rVert \sum_{m\in D_{k+1}}\lambda_m\tau_m\ dx \\
&\quad +\lVert \nabla P_k\rVert_\infty\lVert p_n\rVert_\infty \int_{\mathbb R^d} \lVert \textbf{f}_n \rVert \sum_{m\in D_{k+1}}\lambda_m\tau_m\ dx \\
&\quad +\lVert p_n\rVert_\infty \int_{\mathbb R^d} \lVert \textbf{f}_n \rVert \sum_{m\in D_{k+1}}\lambda_m \lVert \nabla\tau_m\rVert\ dx \\
&\leq \lVert \textbf{f}_n\rVert_{L^1(\rr^d;\rr^d)} \left[\left(\lVert \nabla p_n\rVert_\infty+\lVert \nabla P_k\rVert_\infty\lVert p_n\rVert_\infty\right)\left\lVert\sum_{m\in D_{k+1}}\lambda_m\tau_m\right\rVert_\infty+C\lVert p_n\rVert_\infty\right].
\end{aligned}
\]
Using
\[\lVert \textbf{f}_n\rVert_{L^1(\rr^d;\rr^d)}\leq 1, \qquad \lVert \nabla p_n\rVert_\infty\leq 1,\] and writing \[a:=\left\lVert\sum_{m\in D_{k+1}}\lambda_m\tau_m\right\rVert_\infty<\varepsilon_{k+1},
\]
we obtain
\[\sum_{m\in D_{k+1}}\lambda_m \sup_{\eta>0}\vert I_{n,m}(\eta)\vert< a+\left(a\lVert\nabla P_k\rVert_\infty+C
\right)\lVert p_n\rVert_\infty.
\]
Since $a<\varepsilon_{k+1}$ and $\|p_n\|_\infty\to 0$, it follows that for all sufficiently large $n\in N_k$,
\[\sum_{m\in D_{k+1}}\lambda_m\sup_{\eta>0}|I_{n,m}(\eta)|<\varepsilon_{k+1}.\]
Hence, by the pigeonhole principle, there exist $n_{k+1}\in D_{k+1}$ and an infinite set $N_{k+1}\subset N_k\setminus \{1, 2, \cdots, J_{k+1}\}$ such that
\[\forall n\in N_{k+1}, \forall \eta>0, \left \vert\int_{\mathbb R^d}\textbf{f}_n\cdot\nabla\left(\tau_{n_{k+1}}P_k(p_n\star\phi_\eta)\right)\ dx \right\vert<\varepsilon_{k+1}.
\]

\noindent Set $y_{k+1}=q_{n_{k+1}}$ and $\psi_{k+1}=\tau_{n_{k+1}}$. \\

\noindent This constructs $n_{k+1}$, $\eta_{k+1}$, $y_{k+1}$, $\psi_{k+1}$ and $N_{k+1}$, and we again note that the properties (H) hold up to step $k+1$. The first property holds since \[n_{k+1}\in D_{k+1}\subset N_k.\] For the second property, $\psi_{k+1}=\tau_{n_{k+1}}=1$ on \[\{x: \lVert \nabla q_{n_{k+1}}(x) \rVert \geq 2\varepsilon_{k+1}\}=\{x: \lVert \nabla y_{k+1}(x) \rVert\geq 2\varepsilon_{k+1}\}\] and similarly $\psi_{k+1}=0$ on \[\{x:\lVert \nabla y_{k+1}(x) \rVert\leq \varepsilon_{k+1}\}.\] The third property was justified above, since $\psi_{k+1}=\tau_{n_{k+1}}$. For the fourth property, \[\lVert \nabla P_k \rVert_\infty \lVert y_{k+1}\rVert_\infty\leq \lVert \nabla P_k \rVert_\infty \lVert p_{n_{k+1}} \rVert_\infty<\varepsilon_{k+1},\] since $n_{k+1}>J_{k+1}$. The fifth property holds by \eqcref{2.10} since $y_{k+1}=q_{n_{k+1}}$ and $n_{k+1}\in D_{k+1}$. \\

\noindent\underline{\textbf{Domination by the unit vector basis of $c_0$}} \\

\noindent Set $x_1=y_1$, and for $k\geq 1$, set $x_{k+1}= P_k y_{k+1}$. Each $x_k$ is a smooth, bounded Lipschitz function on $\rr^d$. Let $t\in \rr^d$. We bound $\sum\limits_{k=1}^\infty \lVert \nabla x_k(t) \rVert$. For $k\geq 1$, we have
\[\nabla x_{k+1}(t)=\nabla P_k(t)y_{k+1}(t)+P_k(t)\nabla y_{k+1}(t).\]
\noindent For every $k\geq 1$, the fourth property of (H) gives \[\lVert \nabla P_k\rVert_\infty \lVert y_{k+1} \rVert_\infty<\varepsilon_{k+1}.\] Suppose that $\lVert \nabla y_j(t)\rVert\geq 2\varepsilon_j$ for some $j\in \mathbb{N}$. Let $k_t$ be the least such $j$. Then for every $k<k_t$, $\lVert \nabla y_k(t) \rVert<2\varepsilon_k$, and $\psi_{k_t}(t)=1$ by the second property of (H). Therefore $P_{k}(t)=0$ for every $k\geq k_t$, and hence, using that $\lVert \nabla y_{k_t} \rVert_\infty\leq \lVert \nabla p_{n_{k_t}} \rVert_{\infty}\leq 1$,
\[\sum\limits_{k=1}^\infty \lVert P_{k-1}(t)  \nabla y_k(t) \rVert\leq \sum\limits_{k=1}^{k_t-1} 2\varepsilon_k+\lVert \nabla y_{k_t} \rVert_\infty\leq 1+2\sum_{k=1}^\infty \varepsilon_k.\] \noindent Otherwise, $\lVert \nabla y_k(t) \rVert<2\varepsilon_k$ for every $k$, and the same bound holds. Therefore
\[\forall t\in \rr^d, \qquad\sum_{k=1}^\infty \lVert\nabla x_k(t)\rVert\leq 1+3\sum_{k=1}^\infty \varepsilon_k:=K<\infty.\]

\noindent This implies that for every $a=(a_k)_{k=1}^\infty\in c_{00}$,
\[\left\lVert \sum\limits_{k=1}^\infty a_k \nabla x_k \right \rVert\leq K \lVert a \rVert_\infty.\] \noindent For each $k$, let $\tilde{x}_k:=x_k-x_k(0)$. Then $\tilde{x}_k\in \lip(\rr^d)$ and $\nabla x_k=\nabla \tilde{x}_k\in N^\perp$. The preceding estimate therefore implies the existence of a bounded linear operator $\tilde{T}:(c_{00}, \lVert \cdot \rVert_\infty)\to N^\perp$ with $\tilde{T}e_n=\nabla x_n$ for all $n$. By density we may extend $\tilde{T}$ to a bounded linear operator $T:c_0\to N^\perp$ with $Te_n=\nabla x_n$ for all $n$. \\

\noindent\underline{\textbf{Lower bound on the diagonal pairings}} \\

\noindent To simplify notation, for $\textbf{f}\in L^1(\rr^d;\rr^d)$ and a Lipschitz function $p:\rr^d\to \rr$, we write $\langle \textbf{f}, p\rangle$ to denote $\dd \textbf{f}\cdot \nabla p \ dx$ (this pairing is unchanged by adding a constant to $p$). For $k\in \mathbb{N}$, observe that
\[1-(1-\psi_1)\cdots(1-\psi_{k-1})=\sum\limits_{1\leq j<k}\psi_j \prod_{1\leq i<j}(1-\psi_i).\] \noindent By the first property of (H) and the nestedness of $(N_l)_{l=1}^\infty$, $n_k\in N_j$ for every $j<k$, since $n_k\in N_{k-1}\subset \cdots \subset N_j$. Thus, for every $k\in \mathbb{N}$, using the third property of (H),
\[\begin{aligned}
    \left \vert \langle \textbf{f}_{n_k}, y_k-x_k \rangle\right \vert&\leq \sum_{1\leq j<k}\left\vert \left\langle \textbf{f}_{n_k}, \psi_j \left (\prod_{1\leq i<j}(1-\psi_i)\right )y_k \right \rangle \right \vert\\
    &=\sum_{1\leq j<k}\left\vert\left \langle\textbf{f}_{n_k}, \psi_j  P_{j-1} \left(p_{n_k}\star \phi_{\eta_k} \right ) \right \rangle \right \vert \\
    &<\sum_{1\leq j<k}\varepsilon_j \\
    &<\delta/2.
\end{aligned}\] \noindent By the fifth property of (H), $\langle \textbf{f}_{n_k}, y_k \rangle=\langle\textbf{f}_{n_k}, q_{n_k} \rangle>\delta$. We deduce that for every $k$, \[ \langle\textbf{f}_{n_k}, x_k \rangle  \geq  \langle \textbf{f}_{n_k}, y_k \rangle - \left \vert \langle \textbf{f}_{n_k}, y_k-x_k \rangle\right \vert> \delta-\delta/2= \delta/2.\] \\

\noindent\underline{\textbf{Conclusion of the proof}} \\

\noindent For $k, l\in \mathbb{N}$, let $a_{k, l}=\langle \textbf{f}_{n_k}, x_l \rangle$. Then $ a_{k, k} >\delta/2$ for every $k$. Let $\sigma=(\sigma_l)_{l=1}^\infty\in \{-1, 1\}^{\mathbb{N}}$ be a sequence of signs. Since $T$ is bounded, $\left (\sum\limits_{l=1}^m \sigma_l \nabla x_l\right )_{m=1}^\infty$ is a bounded sequence in $N^{\perp}$, uniformly in $\sigma$. Therefore, since $(\textbf{f}_n)_{n\geq 1}$ is bounded, $\underset{k\in \mathbb{N}}{\sup}\ \underset{\sigma\in \{-1, 1\}^\mathbb{N}}{\sup} \ \underset{m\in \mathbb{N}}{\sup} \left \vert \sum\limits_{l=1}^m \sigma_l a_{k, l} \right \vert<\infty$, which implies that $\underset{k\in \mathbb{N}}{\sup}\sum\limits_{l=1}^\infty \vert a_{k, l} \vert<\infty$. Fix $\sigma\in \{-1, 1\}^\mathbb{N}$. Since $N^{\perp}$ is the dual of the separable Banach space $L^1(\rr^d;\rr^d)/N$, by Banach-Alaoglu and $w^\star$-metrisability of norm-bounded sets of $N^\perp$, there exist $m_1<m_2<\cdots$ in $\mathbb{N}$ and $p_\sigma\in \lip(\rr^d)$ such that $\sum\limits_{l=1}^{m_j} \sigma_l \nabla x_l \overset{w^*}{\to} \nabla p_{\sigma}$ in $N^\perp$ as $j\to \infty$. Pairing this $w^\star$-convergence against $\textbf{f}_{n_k}+N$ and using absolute summability of $(a_{k, l})_{l=1}^\infty$, we deduce for every $k$ that $\langle \textbf{f}_{n_k}, p_\sigma \rangle=\sum\limits_{l=1}^\infty \sigma_l a_{k, l}$. By \eqcref{2.3}, we deduce that for every $\sigma\in \{-1, 1\}^{\mathbb{N}}$, $\sum\limits_{l=1}^\infty \sigma_l a_{k, l}\to 0$ as $k\to \infty$. By a gliding hump argument identical to the one used in the proof of the Schur property of $\ell_1$, this implies that $\sum\limits_{l=1}^\infty \vert a_{k, l} \vert\to 0$ as $k\to \infty$. This contradicts that $ a_{k, k} >\delta/2$ for every $k$, and completes the proof. 

\end{proof}

\noindent We obtain three corollaries of this result. The first states that the Lipschitz-free space over any nonempty subset of $\rr^d$ has the Dunford--Pettis property.

\restatecor{2}{\cortwo}

\begin{proof}
    By Theorem \ref{dunfordpettis}, $\FF(\rr^d)$ has the Dunford--Pettis property, and by Proposition \ref{Complementation}, $\FF(M)\comp \FF(\rr^d)$. Therefore, by Lemma \ref{grothendieck} (i), $\FF(M)$ has the Dunford--Pettis property. 
\end{proof}

\noindent The following result implies as a special case that $\FF(\rr^2)$ is not isomorphic to $\FF(\ell_2)$. 

\restatecor{3}{\corthree}

\begin{proof}
   \noindent By Lemma \ref{lifting}, $\FF(X)$ fails the Dunford--Pettis property. By Corollary \ref{M}, $\FF(M)$ has the Dunford--Pettis property, and hence $\FF(M)$ is not isomorphic to $\FF(X)$.
\end{proof}

\noindent In \cite{isolipschitzspace}, Candido, C\'uth and Doucha ask whether for some $1\leq p<\infty$, $\FF(\ell_p)$ is isomorphic to $\left(\bigoplus\limits_{n=1}^\infty \FF(\ell_p^n) \right )_{\ell_1}$ (\cite{isolipschitzspace}*{Question 8}).  The following result shows that  for every $1<p<\infty$, these spaces are nonisomorphic.

\restatecor{4}{\corfour}

\begin{proof}
    By Lemma \ref{lifting}, $\FF(\ell_p)$ fails the Dunford--Pettis property. For every $n$, $\ell_p^n$ is bilipschitz equivalent to $\ell_2^n$, so $\FF(\ell_p^n)\sim \FF(\rr^n)$. By Theorem \ref{dunfordpettis}, this implies that $\FF(\ell_p^n)$ has the Dunford--Pettis property. By Lemma \ref{grothendieck} (ii), we deduce that $\left(\bigoplus\limits_{n=1}^\infty \FF(\ell_p^n) \right )_{\ell_1}$ has the Dunford--Pettis property, and therefore is not isomorphic to $\FF(\ell_p)$.
\end{proof}

\section{Open problems}

\noindent We collect some open problems from the literature on Lipschitz-free spaces over subsets of $\rr^d$, and one problem related to Corollary \ref{l1sum}. Some fundamental questions about Lipschitz-free spaces over $\rr^d$ and its subsets remain unresolved. It is known that $\FF(\rr)$ and $\FF(\rr^2)$ are not isomorphic (see \cite{planarearthmover}), but the corresponding nonisomorphism result for two distinct dimensions, both at least $2$, is unknown.
\begin{problem}
    Are $\FF(\rr^2)$ and $\FF(\rr^3)$ isomorphic? If $m, n\geq 2$ are distinct, are $\FF(\rr^n)$ and $\FF(\rr^m)$ ever isomorphic?
\end{problem}
\noindent The next problem (raised in \cite{Aliaga}*{Question 5}) relates to isomorphism classes of Lipschitz-free spaces over subsets of $\rr^2$, and originally motivated us to consider the Dunford--Pettis property for $\FF(\rr^2)$. It is known that $\FF(\rr\times \mathbb{Z})$ is not isomorphic to any of $\FF(\rr)$, $\FF(\mathbb{Z})$ or $\FF(\mathbb{Z}^2)$. In the first case, the spaces are not isomorphic because their duals are not isomorphic. In the second and third cases, $\FF(\rr\times \mathbb{Z})$ fails the Radon-Nikod\'ym property, while both $\FF(\mathbb{Z})$ and $\FF(\mathbb{Z}^2)$ have the Radon-Nikod\'ym property. It is, however, unknown whether $\FF(\rr\times \mathbb{Z})$ is isomorphic to $\FF(\rr^2)$.
\begin{problem}[\cite{Aliaga}*{Question 5}]
    Are $\FF(\rr\times \mathbb{Z})$ and $\FF(\rr^2)$ isomorphic?
\end{problem}

\noindent For the analogous problem for Lipschitz spaces, Aliaga poses the following problem in \cite{Aliaga}*{Question 3}.

\begin{problem}
    Let $M\subset \mathbb{R}^2$ be infinite. Does it follow that either $\lip(M)\sim \lip(\rr)$ or $\lip(M)\sim \lip(\rr^2)$?
\end{problem}

\noindent The final problem is motivated by the missing $p=1$ case of Corollary \ref{l1sum}. The space $\ell_1$ has the Dunford--Pettis property, so the isometric lifting property does not directly show that $\FF(\ell_1)$ fails the Dunford--Pettis property.

\begin{problem}
    Does $\FF(\ell_1)$ have the Dunford--Pettis property? If so, are $\FF(\ell_1)$ and $\left(\bigoplus\limits_{n=1}^\infty \FF(\ell_1^n) \right)_{\ell_1}$ isomorphic?
\end{problem}

\section*{Acknowledgements}
\noindent The author is supported by a PhD studentship from the Department of Pure Mathematics and Mathematical Statistics at the University of Cambridge, funded by XTX Markets. This work was carried out as part of a PhD under the supervision of Andr\'as Zs\'ak, who the author would like to thank for his guidance and support.


\begin{bibdiv}
\begin{biblist}

\bib{Aliaga}{article}{
   author={Aliaga, Ram\'on J.},
   title={Lipschitz spaces over non-porous sets},
   journal={J. Funct. Anal.},
   volume={290},
   date={2026},
   number={11},
   pages={Paper No. 111439, 17},
   issn={0022-1236},
   review={\MR{5040093}},
   doi={10.1016/j.jfa.2026.111439},
}

\bib{unrectifiable}{article}{
   author={Aliaga, Ram\'on J.},
   author={Gartland, Chris},
   author={Petitjean, Colin},
   author={Proch\'azka, Anton\'in},
   title={Purely 1-unrectifiable metric spaces and locally flat Lipschitz
   functions},
   journal={Trans. Amer. Math. Soc.},
   volume={375},
   date={2022},
   number={5},
   pages={3529--3567},
   issn={0002-9947},
   review={\MR{4402669}},
   doi={10.1090/tran/8591},
}

\bib{normedspaces}{article}{
    author={R. Aliaga},
    author={R. Medina},
    title={Lipschitz extension and Lipschitz-free spaces over nets in normed spaces},
    date={2026},
    status={preprint},
    doi={10.48550/arXiv.2601.03131},
    eprint={https://arxiv.org/pdf/2601.03131v2},
    setup={\let\PrintDatePV\PrintDate}
}

\bib{bourgain}{article}{
   author={Bourgain, J.},
   title={The Dunford--Pettis property for the ball-algebras, the
   polydisc-algebras and the Sobolev spaces},
   journal={Studia Math.},
   volume={77},
   date={1984},
   number={3},
   pages={246--253},
   issn={0039-3223},
   review={\MR{0745280}},
   doi={10.4064/sm-77-3-246-253},
}

\bib{isolipschitzspace}{article}{
   author={Candido, Leandro},
   author={C\'uth, Marek},
   author={Doucha, Michal},
   title={Isomorphisms between spaces of Lipschitz functions},
   journal={J. Funct. Anal.},
   volume={277},
   date={2019},
   number={8},
   pages={2697--2727},
   issn={0022-1236},
   review={\MR{3990732}},
   doi={10.1016/j.jfa.2019.02.003},
}

\bib{quotientrepresentation}{article}{
   author={C\'uth, Marek},
   author={Kalenda, Ond\v rej F. K.},
   author={Kaplick\'y, Petr},
   title={Isometric representation of Lipschitz-free spaces over convex
   domains in finite-dimensional spaces},
   journal={Mathematika},
   volume={63},
   date={2017},
   number={2},
   pages={538--552},
   issn={0025-5793},
   review={\MR{3706595}},
   doi={10.1112/S0025579317000031},
}

\bib{pelczynski}{article}{
   author={Garc\'ia-Lirola, Luis C.},
   author={Proch\'azka, Anton\'in},
   title={Pe\l czy\'nski space is isomorphic to the Lipschitz free space
   over a compact set},
   journal={Proc. Amer. Math. Soc.},
   volume={147},
   date={2019},
   number={7},
   pages={3057--3060},
   issn={0002-9939},
   review={\MR{3973906}},
   doi={10.1090/proc/14446},
}

\bib{godefroykalton}{article}{
   author={Godefroy, G.},
   author={Kalton, N. J.},
   title={Lipschitz-free Banach spaces},
   note={Dedicated to Professor Aleksander Pe\l czy\'nski on the occasion of
   his 70th birthday},
   journal={Studia Math.},
   volume={159},
   date={2003},
   number={1},
   pages={121--141},
   issn={0039-3223},
   review={\MR{2030906}},
   doi={10.4064/sm159-1-6},
}

\bib{R^nquotient}{article}{
   author={Godefroy, Gilles},
   author={Lerner, Nicolas},
   title={Some natural subspaces and quotient spaces of $L^1$},
   journal={Adv. Oper. Theory},
   volume={3},
   date={2018},
   number={1},
   pages={61--74},
   issn={2662-2009},
   review={\MR{3730340}},
   doi={10.22034/aot.1702-1124},
}

\bib{Kaufmann}{article}{
   author={Kaufmann, Pedro Levit},
   title={Products of Lipschitz-free spaces and applications},
   journal={Studia Math.},
   volume={226},
   date={2015},
   number={3},
   pages={213--227},
   issn={0039-3223},
   review={\MR{3356002}},
   doi={10.4064/sm226-3-2},
}

\bib{LeeNaor}{article}{
   author={Lee, James R.},
   author={Naor, Assaf},
   title={Extending Lipschitz functions via random metric partitions},
   journal={Invent. Math.},
   volume={160},
   date={2005},
   number={1},
   pages={59--95},
   issn={0020-9910},
   review={\MR{2129708}},
   doi={10.1007/s00222-004-0400-5},
}

\bib{planarearthmover}{article}{
   author={Naor, Assaf},
   author={Schechtman, Gideon},
   title={Planar earthmover is not in $L_1$},
   journal={SIAM J. Comput.},
   volume={37},
   date={2007},
   number={3},
   pages={804--826},
   issn={0097-5397},
   review={\MR{2341917}},
   doi={10.1137/05064206X},
}

\bib{Toland}{article}{
    author={Toland, John F.},
    title={Localizing weak convergence in $L_\infty$},
    date={2018},
    eprint={arXiv:1802.01878},
}

\end{biblist}
\end{bibdiv}
\end{document}